\documentclass[oneside, 11pt]{amsart}
\usepackage[utf8]{inputenc}

\usepackage{amsfonts}
\usepackage{amsmath}
\usepackage{amssymb}
\usepackage{amsthm}
\usepackage[a4paper]{geometry}
\usepackage{fullpage}
\usepackage{mathrsfs} 
\usepackage[dvipsnames]{xcolor}
\usepackage{dsfont}
\usepackage{mathtools,diagbox}
\usepackage{mathtools}
\mathtoolsset{showonlyrefs}
\usepackage{caption}
\usepackage[normalem]{ulem}

\usepackage[colorlinks=true, linkcolor=blue, citecolor=blue]{hyperref}

\usepackage{appendix}
\usepackage{enumerate}
\usepackage{tikz-cd} 

\usepackage{orcidlink}

\theoremstyle{plain}
\newtheorem{theorem}{Theorem}
\newtheorem{proposition}[theorem]{Proposition}
\newtheorem{lemma}[theorem]{Lemma}
\newtheorem{corollary}[theorem]{Corollary}
\newtheorem{definition}[theorem]{Definition}
\newtheorem{remark}[theorem]{Remark}

\theoremstyle{definition}

\numberwithin{theorem}{section}
\numberwithin{equation}{section} 


\newcommand{\vertiii}[1]{{\left\vert\kern-0.25ex\left\vert\kern-0.25ex\left\vert #1 
    \right\vert\kern-0.25ex\right\vert\kern-0.25ex\right\vert}}

\newcommand{\Z}{\mathbb{Z}}

\newcommand{\R}{\mathbb{R}}
\newcommand{\N}{\mathbb{N}}


\newcommand{\cN}{\mathcal{N}}

\newcommand{\cR}{\mathcal{R}}

\renewcommand{\P}{\mathbb P}
\newcommand{\eps}{\varepsilon}
\newcommand{\G}{\mathcal C}


\newcommand{\be}{\begin{equation}}
\newcommand{\ee}{\end{equation}}
\newcommand{\ba}{\begin{align}}
\newcommand{\ea}{\end{align}}

\newcommand\numberthis{\addtocounter{equation}{1}\tag{\theequation}}

\title[Local criteria for global comparisons]{Local criteria for global connectivity comparisons: \\ beyond stochastic domination}
\author{Johannes B\"aumler\orcidlink{0000-0002-3823-9899}}
\address{Department of Mathematics, University of California, Los Angeles}
\email{jbaeumler@math.ucla.edu}
\author{Benedikt Jahnel\orcidlink{0000-0002-4212-0065}}
\address{Institut für Mathematische Stochastik, Technische Universit\"at Braunschweig \& Weierstrass Institute for Applied Analysis and Stochastics, Berlin}
\email{jahnel@wias-berlin.de}
\author{Jonas K\"oppl\orcidlink{0000-0001-9188-1883}}
\address{Weierstrass Institute for Applied Analysis and Stochastics, Berlin}
\email{koeppl@wias-berlin.de}
\author{Bas Lodewijks\orcidlink{0000-0001-5624-2410}}
\address{School of Mathematical and Physical Sciences, University of Sheffield}
\email{bas.lodewijks@sheffield.ac.uk}
\author{Lily Reeves\orcidlink{0000-0002-4169-9917}}
\address{Department of Mathematics, California Institute of Technology}
\email{lreeves@caltech.edu}
\author{András Tóbiás\orcidlink{0000-0003-4881-2553}}
\address{Budapest University of Technology and Economics and HUN-REN Alfréd Rényi Institute of Mathematics}
\email{tobias@cs.bme.hu}

\date{\today}
\keywords{Oriented percolation, directed graphs, bidirectional percolation, degree constraint}
\subjclass{Primary 60K35; Secondary 82B43} 
\begin{document}

\begin{abstract} 
We introduce a site-wise domination criterion for local percolation models, which enables the comparison of one-arm probabilities even in the absence of stochastic domination. The method relies on a local-to-global principle: if, at each site, one model is more likely than the other to connect to a subset of its neighbors, for all nontrivial such subsets, then this advantage propagates to connectivity events at all scales. In this way, we obtain a robust alternative to stochastic domination, applicable in all cases where the latter works and in many where it does not. As a main application, we compare classical Bernoulli bond percolation with degree-constrained models, showing that degree constraints enhance percolation, and obtain asymptotically optimal bounds on critical parameters for degree-constrained models.
\end{abstract}

\maketitle

\section{Introduction}
Percolation theory provides a fundamental probabilistic framework for understanding the onset of large-scale connectivity in random media. In its most basic formulation on a lattice, each edge (or site) is independently declared open with probability $p$, and one studies the resulting random subgraph as $p$ varies. The model exhibits a phase transition at a critical threshold $p_c$, below which only finite clusters appear and above which an infinite cluster emerges almost surely.

A recurring theme in percolation theory is to compare the connectivity properties of two different models. The standard tool for this is stochastic domination, which allows one to compare random configurations site by site. However, stochastic domination is often too rigid, and fails in natural situations where a comparison at the level of connectivity is still possible.

In this paper we propose an alternative comparison method, which is strictly more robust: it applies whenever stochastic domination does, and also in cases where it does not. Our main result is a local-to-global principle showing that if one model is more likely than another to connect a vertex to each of its neighbors, then it is also more likely to connect the origin to the boundary of a ball of radius $n$. 

As an application, we analyze degree-constrained percolation models such as $k$-nearest-neighbor graphs, in which the number of open edges incident to each vertex is subject to local restrictions. Such models arise naturally in statistical physics and network theory, and they have been studied in various contexts, see, e.g., \cite{HaggMee96,Pete,BalisterBollobas13,holmes2014degenerate,CHJK24,JKLT} and the references therein. Our criterion enables us to establish new and asymptotically optimal bounds for the critical parameter for various variants of degree-constrained models. 

\medskip
In the following Section~\ref{sec_main}, we present our main result including its proof. In Section~\ref{sec_ex}, we apply the result to models with degree constraints and establish percolation in several previously open parameter regimes for directed and undirected $k$-nearest-neighbor graphs. Additionally, we obtain asymptotically optimal bounds on the critical value for the bidirectional nearest-neighbor model as the dimension tends to infinity. 

\section{Setting and main results}\label{sec_main}
For $u \in \Z^d$ denote by $\mathcal{N}_u = \{v \in \Z^d\colon \|u-v\|_{1} =1\}$ the set of nearest neighbors of $u$. Consider two probability measures $\mathbf{P}$ and $\mathbf{Q}$ on $\mathcal{P}(\mathcal{N}_o)=\left\{ A : A \subseteq \mathcal{N}_o \right\}$. 
Denote by $\mathbb{P}$ and $\mathbb{Q}$ the product probability measure where each vertex $u$ samples a subset $N(u)$ of $\mathcal{N}_u$ according to $\mathbf{P}$ or $\mathbf{Q}$ respectively, independently of all others, such that $\{x-u\colon x\in N(u)\}$ is distributed according to $\mathbf{P}$ or $\mathbf{Q}$ respectively. 
\subsection{The exploration process}\label{sec:exploration-process}
Starting at some fixed vertex, say the origin $o\in \Z^d$, we want to explore the set of vertices that can be reached by only using edges to vertices in $N(o)$ and so on. For this, let $\G_0 = \{o\}$ be the \textit{zeroth generation}. Given the first $(n-1)$ generations, the $n$-th generation is then given by 
\be\label{eq:C}
    \G_n = \bigcup_{u \in \G_{n-1}} N(u) \setminus \Big(\bigcup_{0\le i\le n-1}\G_i\Big). 
\ee
This exploration process may or may not terminate for a finite $n$ in the sense that $\G_n = \emptyset$. In any case, we define the \textit{explored cluster} to be 
\begin{align*}
    \G := \bigcup_{n\ge 0}\G_n. 
\end{align*}
Consider the directed graph with vertex set $\Z^d$ and directed edge set $\left\{ (u,v) : v \in N(u) \right\}$. Then the exploration process described above corresponds to a breadth-first exploration of $\G$, which is the out-component of the origin. In particular, $\mathbb{P}[|\G|=\infty]=\mathbb{P}[o \rightsquigarrow  \infty] =\theta(\mathbf{P})$ is the {\em percolation probability} of the system. Since the models under consideration here are based on an i.i.d.\ random field, the models are ergodic and $\theta(\mathbf{P})>0$ if and only if the model percolates in the sense that $\mathbb{P}[\exists\ \text{an unbounded open directed path starting from the origin}]>0$.  

Define $B_{n} \coloneqq \left\{ x \in \Z^d : \|x\|_{1} \leq n \right\}$ as the closed $\ell_{1}$-ball of radius $n$ around the origin and let $\partial B_{n} \coloneqq B_{n+1} \setminus B_{n}$ be the boundary of that ball.
Now consider the event $\{o \rightsquigarrow \partial B_n\} = \{ \G \cap \partial B_n \neq \emptyset \}$ that there exists a directed path of nearest-neighbor sites in $\mathcal{C}$ from the origin to the boundary of $B_n$. We are interested in comparing the probabilities $\mathbb{Q}[o \rightsquigarrow \partial B_n]$ and $\mathbb{P}[o\rightsquigarrow \partial B_n]$ without appealing to stochastic domination arguments. 
\begin{theorem}[Local-to-global]\label{thm:local-to-global}
    If $\mathbf{P}$, $\mathbf{Q}$ are such that
    \begin{equation}\label{local-comparison}
        \mathbf{P}[N(o) \cap A \neq \emptyset] \leq \mathbf{Q}[N(o) \cap A \neq \emptyset],\qquad\text{for all $\emptyset \subsetneq A \subseteq \mathcal{N}_o$},
    \end{equation}
    then, 
\begin{equation}\label{global-comparison}
        \mathbb{P}[o \rightsquigarrow \partial B_n] \leq \mathbb{Q}[o \rightsquigarrow \partial B_n],\qquad\text{for all $n \in \N$}.
    \end{equation}
\end{theorem}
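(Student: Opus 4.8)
The plan is to prove \eqref{global-comparison} by interpolating between $\mathbb{P}$ and $\mathbb{Q}$ one vertex at a time. First I would note that $\{o\rightsquigarrow\partial B_n\}$ depends only on the finitely many neighbourhoods $(N(u))_{u\in B_n}$: truncating any directed path from $o$ to $\partial B_n$ at its first visit to $\partial B_n$ leaves a path whose vertices, except the last, all lie in $B_n$ and which uses only out-edges emanating from $B_n$. Enumerating $B_n=\{u_1,\dots,u_m\}$, let $\mathbb{M}_j$ be the product law in which $N(u_i)-u_i\sim\mathbf{Q}$ for $i\le j$ and $N(u_i)-u_i\sim\mathbf{P}$ for $j<i\le m$ (and $N(u)=\emptyset$ for $u\notin B_n$, which does not affect the event). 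Then $\mathbb{M}_0$ and $\mathbb{M}_m$ agree with $\mathbb{P}$ and $\mathbb{Q}$ on $\{o\rightsquigarrow\partial B_n\}$, so it suffices to show $\mathbb{M}_{j-1}[o\rightsquigarrow\partial B_n]\le\mathbb{M}_j[o\rightsquigarrow\partial B_n]$ for each $j$ — that is, re-sampling a single neighbourhood from $\mathbf{Q}$ instead of $\mathbf{P}$ cannot decrease the one-arm probability — and then telescope.

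For the one-vertex step I would write $u=u_j$ and condition on $(N(v))_{v\in B_n\setminus\{u\}}$, whose law is the same under $\mathbb{M}_{j-1}$ and $\mathbb{M}_j$. Let $\mathcal{G}^-$ be the directed graph on $\Z^d$ with edges $(v,w)$, $w\in N(v)$, for $v\in B_n\setminus\{u\}$ (so $u$ has no out-edges), and $\mathcal{G}$ the graph obtained by adjoining the out-edges of $u$; then $\{o\rightsquigarrow\partial B_n\}$ is the event that $o$ reaches $\partial B_n$ in $\mathcal{G}$. Setting $\mathcal{A}:=\mathbf{1}\{o\text{ reaches }\partial B_n\text{ in }\mathcal{G}^-\}$, $\mathcal{B}:=\mathbf{1}\{o\text{ reaches }u\text{ in }\mathcal{G}^-\}$ and $\mathcal{R}:=\{w\in\mathcal{N}_u: w\text{ reaches }\partial B_n\text{ in }\mathcal{G}^-\}$ — all determined by the conditioning — I would verify, by splitting a witnessing path in $\mathcal{G}$ at the last out-edge of $u$ that it uses, that
\begin{equation*}
\mathbf{1}\{o\text{ reaches }\partial B_n\text{ in }\mathcal{G}\}=\mathcal{A}\ \vee\ \bigl(\mathcal{B}\wedge\mathbf{1}\{N(u)\cap\mathcal{R}\ne\emptyset\}\bigr).
\end{equation*}
Averaging over $N(u)$ alone, the conditional one-arm probability equals $\mathcal{A}+(1-\mathcal{A})\mathcal{B}\cdot\mathbf{P}[N(u)\cap\mathcal{R}\ne\emptyset]$ under $\mathbb{M}_{j-1}$, and the same with $\mathbf{Q}$ under $\mathbb{M}_j$. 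Since $(1-\mathcal{A})\mathcal{B}\ge 0$, the one-vertex inequality follows once we know $\mathbf{P}[N(u)\cap\mathcal{R}\ne\emptyset]\le\mathbf{Q}[N(u)\cap\mathcal{R}\ne\emptyset]$, i.e.\ \eqref{local-comparison} applied with $A=\mathcal{R}-u$.

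To finish I would check that $A=\mathcal{R}-u$ lies in the admissible range of \eqref{local-comparison} precisely in the case that matters, namely $\mathcal{A}=0$ and $\mathcal{B}=1$ (otherwise the conditional probability does not depend on $N(u)$). The case $\mathcal{R}=\emptyset$ is vacuous. If $\mathcal{R}=\mathcal{N}_u$ with $u\ne o$, let $a\in\mathcal{N}_u$ be the predecessor of $u$ on a path realising $\mathcal{B}=1$ in $\mathcal{G}^-$; then $o$ reaches $a$ in $\mathcal{G}^-$ and $a\in\mathcal{R}$, so $o$ reaches $\partial B_n$ in $\mathcal{G}^-$, contradicting $\mathcal{A}=0$. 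Hence for $u\ne o$ the set $\mathcal{R}-u$ is a proper nonempty subset of $\mathcal{N}_o$ and \eqref{local-comparison} applies directly; the single vertex $u=o$ needs in addition the instance $\mathbf{P}[N(o)\ne\emptyset]\le\mathbf{Q}[N(o)\ne\emptyset]$ (the case $A=\mathcal{N}_o$). Telescoping $\mathbb{M}_0,\dots,\mathbb{M}_m$ then yields \eqref{global-comparison}.

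The step I expect to be the real content is the middle one: after conditioning on all neighbourhoods but one, peeling off the last contribution of that neighbourhood and recognising that the event's residual dependence on $N(u)$ is exactly $\{N(u)\cap\mathcal{R}\ne\emptyset\}$ for a proper subset $\mathcal{R}\subseteq\mathcal{N}_u$. That is precisely the shape the hypothesis \eqref{local-comparison} controls, and it is the structural reason the one-arm comparison persists in the absence of stochastic domination; the truncation to $B_n$, the telescoping, and the case analysis for $\mathcal{R}$ are routine.
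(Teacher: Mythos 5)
Your proof follows the same interpolation--pivotality strategy as the paper's (restrict to $B_n$, replace one marginal $\mathbf P\to\mathbf Q$ at a time, condition on $\mathcal F_{a^{\rm c}}$, and identify the residual dependence on $N(u)$ as the event $\{N(u)\cap\mathcal R\neq\emptyset\}$), and the decomposition $\mathbf 1\{o\text{ reaches }\partial B_n\}=\mathcal A\vee(\mathcal B\wedge\mathbf 1\{N(u)\cap\mathcal R\neq\emptyset\})$ together with the admissibility check for $u\neq o$ is exactly the paper's cases (1)--(3), with the implicit properness argument made explicit.

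You have, however, surfaced a genuine issue that you should not brush past. For $u=o$ the set $\mathcal R$ can equal $\mathcal N_o$: if, after conditioning, every neighbour of the origin reaches $\partial B_n$ in $\mathcal G^-$, then the event collapses to $\{N(o)\neq\emptyset\}$, and the step requires $\mathbf P[N(o)\neq\emptyset]\leq\mathbf Q[N(o)\neq\emptyset]$, i.e.\ the instance $A=\mathcal N_o$ of \eqref{local-comparison}. As you note, that instance is excluded by the hypothesis as written ($\emptyset\subsetneq A\subsetneq\mathcal N_o$). This is not a formality you can wave through with ``telescoping then yields'': without it the conclusion is actually false. Take $d=1$, $\mathbf P=\tfrac12\delta_{\{-1\}}+\tfrac12\delta_{\{1\}}$ and $\mathbf Q=\tfrac12\delta_{\{-1,1\}}+\tfrac12\delta_{\emptyset}$. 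Then for both proper nonempty $A$ one has $\mathbf P[N(o)\cap A\neq\emptyset]=\tfrac12=\mathbf Q[N(o)\cap A\neq\emptyset]$, so \eqref{local-comparison} holds, yet a direct computation gives $\mathbb P[o\rightsquigarrow\partial B_1]=\tfrac12$ while $\mathbb Q[o\rightsquigarrow\partial B_1]=\tfrac12\cdot\bigl(1-\tfrac14\bigr)=\tfrac38$. The paper's own proof has the same blind spot: it asserts in case (3) that $A\subsetneq\mathcal N_a$, and the justification (the predecessor of $a$ on a path from $o$ would itself reach the boundary) only works for $a\neq o$. The correct hypothesis is $\emptyset\subsetneq A\subseteq\mathcal N_o$; with that change your argument is complete and coincides with the paper's. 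Note that the applications (Proposition~\ref{prop-aonk}, Corollary~\ref{prop:comparison-k-dng-bernoulli}) do verify the comparison for all $A\subseteq\mathcal N_o$ including $A=\mathcal N_o$, so nothing downstream is affected; but you should state explicitly that the extra instance is an additional assumption rather than presenting the telescoping as closing the proof under the hypothesis as printed.
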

Statement~\eqref{global-comparison} in particular allows to transfer any decay rates of the probability of the one-arm event from $\mathbb{Q}$ to $\mathbb{P}$ and in particular implies domination of the percolation functions, i.e., 
    \begin{equation}
        \mathbb{P}[o \rightsquigarrow  \infty] \leq \mathbb{Q}[o \rightsquigarrow \infty].
    \end{equation}
Note that our result readily generalizes to vertex-transitive graphs in place of $\Z^d$ and, as will become clear from the proof, it also extends to other global connectivity events beyond the one-arm event.

Further, note that the comparison~\eqref{local-comparison} is always satisfied when $\mathbf{P}$ is stochastically dominated by $\mathbf{Q}$; however, there are also cases where stochastic domination does not hold, yet~\eqref{local-comparison} still applies.
As an example, consider the discrete $k$-nearest-neighbor graph on $\Z^d$, where $\mathbf{Q}$ samples a subset of size $k$ uniformly at random from $\mathcal{N}_o$, and let $\mathbf{P}$ be such that every neighbor of $o$ is included with probability $k/2d$ independently of all others. Neither of the two measures stochastically dominates the other, but the local comparison~\eqref{local-comparison} holds, see Section \ref{sec:k-nn-calculation}. 

\subsection{Discrete interpolation inequality}
The proof of Theorem \ref{thm:local-to-global} relies on a discrete interpolation inequality that is of independent interest.
For the statement and proof, we only need to consider restrictions of $\mathbb{P}$ and $\mathbb{Q}$ to finite balls $B_n \subset \Z^d$. 
We omit these restrictions from the notation to enhance readability. \medskip

For a set $U \subseteq B_n$ the measure $\mathbb{Q}_U$ is defined by independently sampling $N(u) \sim \mathbf{Q}$ for all $u \in U$ and $N(v) \sim \mathbf{P}$ for all $v \in B_n \setminus U$. The family of measures $(\mathbb{Q}_U)_{U \subseteq B_n}$ can be seen as a discrete interpolation between $\mathbb{P}$ and $\mathbb{Q}$ and helps us to lift the local comparison~\eqref{local-comparison} to the global comparison~\eqref{global-comparison} via a piece-by-piece replacement. 

\begin{proposition}(Interpolation)\label{prop:discrete-interpolation}
    Let $n \in \N$, $U \subseteq B_n$ and $a \in B_n$. Then, under Condition~\eqref{local-comparison}, 
    \begin{align*}
        \mathbb{Q}_U[o \rightsquigarrow \partial B_n]\leq \mathbb{Q}_{U \cup \{a\}}[o \rightsquigarrow \partial B_n]. 
    \end{align*}
\end{proposition}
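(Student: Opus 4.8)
The plan is a single-site resampling (an ``interpolation step''): change the law at the vertex $a$ from $\mathbf P$ to $\mathbf Q$, keep every other vertex fixed, and show that this can only help the one-arm event. If $a\in U$ there is nothing to show, so assume $a\in B_n\setminus U$; then under $\mathbb Q_U$ we have $N(a)\sim\mathbf P$ and under $\mathbb Q_{U\cup\{a\}}$ we have $N(a)\sim\mathbf Q$, while in both cases $(N(u))_{u\in B_n,\,u\neq a}$ has the same law and is independent of $N(a)$. Conditioning on $\mathcal F_a:=\sigma\big(N(u)\colon u\in B_n,\ u\neq a\big)$, it suffices to fix an arbitrary realization of $(N(u))_{u\neq a}$ and prove that the probability of $\{o\rightsquigarrow\partial B_n\}$, with only $N(a)$ random, does not decrease when the law of $N(a)$ is changed from $\mathbf P$ to $\mathbf Q$.

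For such a fixed configuration, let $G^-$ be the directed graph on $B_n\cup\partial B_n$ with edge set $\{(u,x)\colon x\in N(u),\ u\neq a\}$ (the out-edges of $a$ removed), let $R$ be the set of vertices reachable from $o$ in $G^-$, and let $W$ be the set of vertices from which $\partial B_n$ is reachable in $G^-$ (so $\partial B_n\subseteq W$, and $a\notin W$ since $a\in B_n$ has no out-edges in $G^-$). The combinatorial heart of the argument is the deterministic identity
\[
  \{o\rightsquigarrow\partial B_n\}=\{R\cap W\neq\emptyset\}\ \cup\ \big(\{a\in R\}\cap\{N(a)\cap W\neq\emptyset\}\big).
\]
The inclusion ``$\supseteq$'' follows by concatenating paths. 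For ``$\subseteq$'', take a shortest (hence simple) directed path from $o$ to $\partial B_n$: if it avoids $a$ it is a path in $G^-$ and already witnesses $R\cap W\neq\emptyset$; if it passes through $a$, the portion before $a$ lies in $G^-$ and shows $a\in R$, the step leaving $a$ lands in some $v\in N(a)$, and the portion after $a$ lies in $G^-$ (it cannot return to $a$, by simplicity) and shows $v\in W$. Freezing $R$ and $W$ and taking expectation over $N(a)$, the conditional probability of the one-arm event equals $\mathbf 1\{R\cap W\neq\emptyset\}+\mathbf 1\{R\cap W=\emptyset,\ a\in R\}\cdot\nu[N(a)\cap W\neq\emptyset]$, where $\nu$ is $\mathbf P$ or $\mathbf Q$; only the last factor involves $\nu$, so everything reduces to showing $\mathbf P[N(a)\cap W\neq\emptyset]\le\mathbf Q[N(a)\cap W\neq\emptyset]$ on the event $\{R\cap W=\emptyset,\ a\in R\}$.

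Since $\{x-a\colon x\in N(a)\}$ is distributed as $\mathbf P$ (under $\mathbb Q_U$) resp.\ $\mathbf Q$ (under $\mathbb Q_{U\cup\{a\}}$), the two quantities to compare are $\mathbf P[N(o)\cap\tilde W\neq\emptyset]$ and $\mathbf Q[N(o)\cap\tilde W\neq\emptyset]$ with $\tilde W:=\{x-a\colon x\in W\cap\mathcal N_a\}\subseteq\mathcal N_o$; for $\emptyset\subsetneq\tilde W\subsetneq\mathcal N_o$ this is precisely hypothesis \eqref{local-comparison} with $A=\tilde W$, and $\tilde W=\emptyset$ is trivial. I expect the one genuinely delicate point to be excluding $\tilde W=\mathcal N_o$: if $a\in R$ and $a\neq o$, then a simple path from $o$ to $a$ in $G^-$ enters $a$ from a neighbor $b\in\mathcal N_a$ with $b\in R$, and $R\cap W=\emptyset$ forces $b\notin W$, so $\tilde W\subsetneq\mathcal N_o$. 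The remaining case $a=o$ — where $R=\{o\}$, the event $\{R\cap W=\emptyset\}$ holds automatically, and $\{o\rightsquigarrow\partial B_n\}$ reduces to $\{N(o)\cap W\neq\emptyset\}$ — must be handled on its own and uses the inequality $\mathbf P[N(o)\neq\emptyset]\leq\mathbf Q[N(o)\neq\emptyset]$ (the endpoint case $A=\mathcal N_o$ of the local comparison). Integrating the resulting pointwise inequality over $\mathcal F_a$ yields the Proposition.
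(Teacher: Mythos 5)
Your approach follows the paper's proof closely --- condition on the configuration off $a$, isolate the pivotal case, and apply the local comparison --- and you are in fact more explicit than the paper in formalizing this via $R$ and $W$ and in checking that the target set is a \emph{proper} subset of $\mathcal N_a$ when $a\neq o$. However, the handling of $a=o$ contains a genuine gap. You write that this case ``uses the inequality $\mathbf P[N(o)\neq\emptyset]\leq\mathbf Q[N(o)\neq\emptyset]$ (the endpoint case $A=\mathcal N_o$ of the local comparison),'' but hypothesis~\eqref{local-comparison} is stated only for $\emptyset\subsetneq A\subsetneq\mathcal N_o$; the case $A=\mathcal N_o$ is \emph{not} available, nor is it implied by the others. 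On $\Z$, take $\mathbf P$ to be uniform on the two singletons $\{-1\},\{+1\}$ and let $\mathbf Q$ put mass $1/2$ on $\{-1,+1\}$ and mass $1/2$ on $\emptyset$. Then~\eqref{local-comparison} holds (with equality on both proper nonempty subsets), yet
\begin{align}
\mathbb P[o\rightsquigarrow\partial B_n]=2^{-n}\;>\;2^{-n}-2^{-2n-1}=\mathbb Q[o\rightsquigarrow\partial B_n],
\end{align}
so the conclusion of the Proposition fails. Thus the missing endpoint is substantive, not cosmetic: the hypothesis must be strengthened to all $\emptyset\subsetneq A\subseteq\mathcal N_o$, and with that change your argument goes through cleanly (as do all the applications in Section~\ref{sec_ex}, which verify the comparison for every $A\subseteq\mathcal N_o$ anyway). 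In fact you have uncovered a gap in the paper's own proof, where case $(3)$ asserts $A\subsetneq\mathcal N_a$ without justification when $a=o$; the fix is to strengthen the stated hypothesis, not to quietly invoke the excluded endpoint as if it were given.
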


Our main result now follows without effort.
\begin{proof}[Proof of Theorem~\ref{thm:local-to-global}]
Proposition~\ref{prop:discrete-interpolation} directly implies Theorem~\ref{thm:local-to-global} by iteratively adding one more point and noting that $\mathbb Q_\emptyset [o \rightsquigarrow \partial B_n]=\mathbb P[o \rightsquigarrow \partial B_n]$ and $\mathbb Q_{B_n}[o \rightsquigarrow \partial B_n]=\mathbb Q[o \rightsquigarrow \partial B_n]$, since we are only looking at connection events in finite volumes and thus we at most need to add all points in the finite volume under consideration.
\end{proof}

\begin{figure}[ht]
\centering
\includegraphics[width=.5\textwidth]{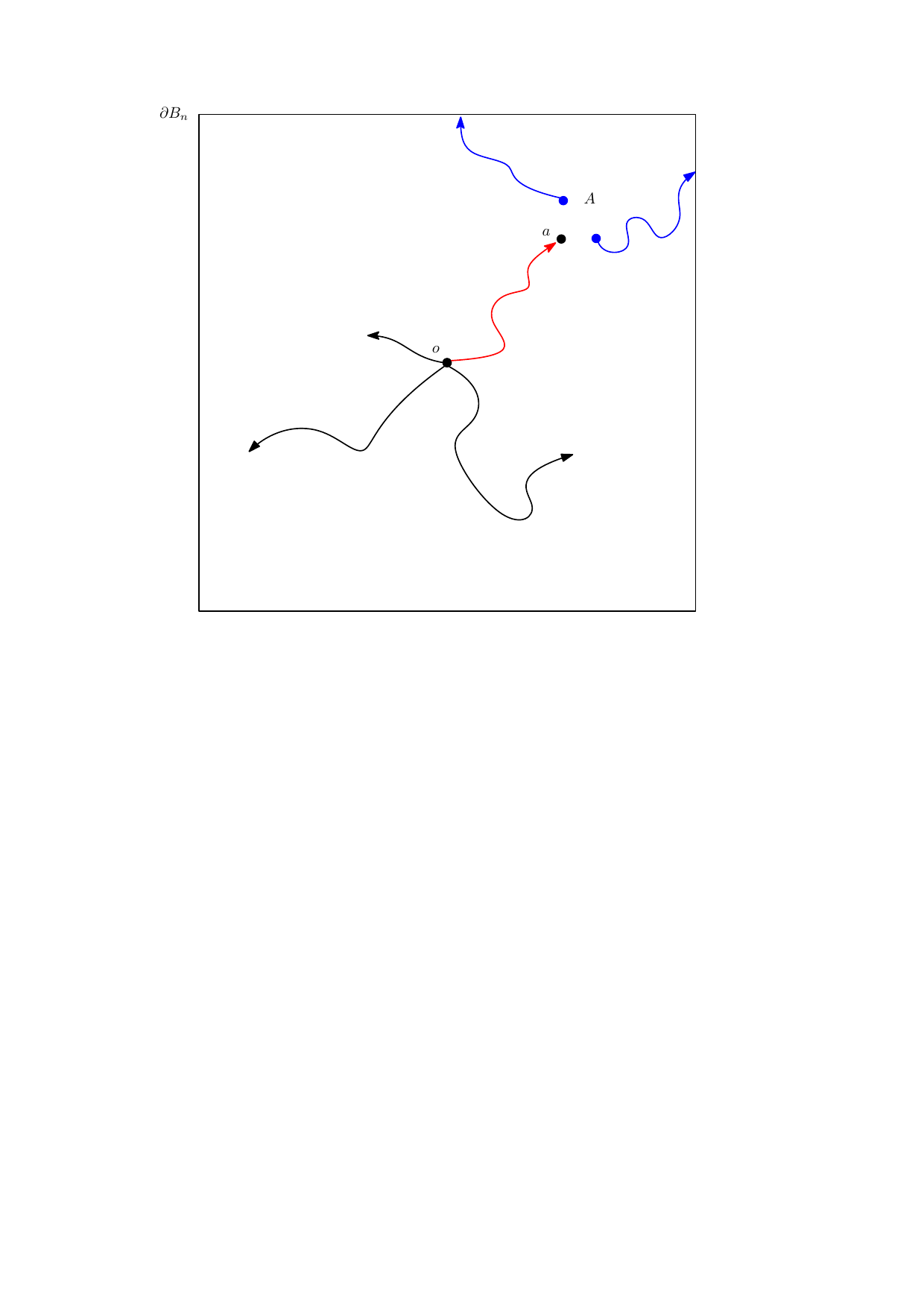}
\caption{\label{fig:local-comparison} Conditioning on all vertices in $B_n \setminus \{a\}$ reduces the question of the origin $o$ being connected to the boundary $\partial B_n$ to a local question: If $a$ is pivotal and $A$ is the set of its neighbors from which one can reach the boundary, then the event $\{o \rightsquigarrow \partial B_n\}$ occurs if and only if $\{N(a)\cap A\neq\emptyset\}$ occurs.}
\end{figure}

\begin{proof}[Proof of Proposition~\ref{prop:discrete-interpolation}]
    Without loss of generality we can assume that $a \notin U$. 
    Define the $\sigma$-algebra $\mathcal{F}_{a^{\rm c}}$ by $\mathcal{F}_{a^{\rm c}} = \sigma \left( N(u) \colon u \in \Z^d \setminus \{a \} \right)$.
    Conditionally on $\mathcal{F}_{a^{\rm c}}$, there are three different cases which can occur. 
    \begin{enumerate}[(1)]
        \item There exists a path from the origin $o$ to the boundary $\partial B_n$ which only uses vertices $u\neq a$. 
        \item Even if $N(a) = \mathcal{N}_a$ were to hold, there can be no path connecting the origin to the boundary. 
        \item There is no path connecting the origin to the boundary that does not use $a$, but there exists at least one path connecting the origin to $a$ and there exists at least one other neighbor of $a$ that is connected to the boundary via an open path. This is the case in which we call $a$ \emph{pivotal} (see Figure~\ref{fig:local-comparison}).
    \end{enumerate}
    Then in the first two cases, there is nothing we have to do, because in those cases the occurrence of the event $\{o \rightsquigarrow \partial B_n\}$ does not depend on $N(a)$. For the last case, note that, conditionally on $\mathcal{F}_{a^{\rm c}}$, the set of neighbors of $a$ that are connected to the boundary is some non-empty set $A \subsetneq \mathcal{N}_a$. So, on this event we can apply \eqref{local-comparison} to obtain 
    \begin{align}\label{eq:discrinterp}
        \mathbb{Q}_U[o \rightsquigarrow \partial B_n \lvert \mathcal{F}_{a^{\rm c}}] 
        &=
        \mathbb{Q}_U[N(a)\cap A\neq\emptyset \lvert \mathcal F_{a^{\rm c}}]= \mathbf P[N(a) \cap A \neq \emptyset]\\
        &\le \mathbf Q[N(a) \cap A \neq \emptyset]
        =
        \mathbb{Q}_{U \cup \{a\}}[N(a)\cap A\neq\emptyset \lvert \mathcal{F}_{a^{\rm c}}].
\end{align}
    By combining this with the first two cases and taking expectations the claim follows. 
\end{proof}

\subsection{Strict comparisons and exponential decay}
In many cases, the comparison~\eqref{local-comparison} is actually strict, in the sense that 
\begin{align}\label{ineq:strict-comparison}
    \mathbf{P}[N(o) \cap A \neq \emptyset] < \mathbf{Q}[N(o) \cap A \neq \emptyset], \quad \text{for all }  \emptyset \subsetneq A \subsetneq \mathcal{N}_0. 
\end{align}
Under the additional assumption that $\mathbb{Q}[o \rightsquigarrow \infty] = 0$, one can then not only deduce that $\mathbb{P}[o \rightsquigarrow \infty] = 0$ via~\eqref{global-comparison} but also derive exponential decay for the probability of the one-arm event.

\begin{theorem}[Strict comparison]\label{thm:strict-comparison}
    Let $\mathbf{P}, \mathbf{Q}$ be such that \eqref{ineq:strict-comparison} holds and additionally assume that $\mathbb{Q}[o \rightsquigarrow \infty] = 0$. Then, there exists a constant $c>0$ such that for all $n\in \N$,
    \begin{align}
        \mathbb{P}[o \rightsquigarrow \partial B_n] \leq {\rm e}^{-cn}. 
    \end{align}
\end{theorem}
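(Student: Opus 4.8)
The plan is to turn the strict local inequality \eqref{ineq:strict-comparison} into a genuine amount of slack, use it to build a one‑parameter ``sprinkled'' family in which the model $\mathbb P$ sits \emph{strictly} inside its subcritical phase, and then invoke sharpness of the phase transition for that family.

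\emph{Step 1: a uniform gap.} Since there are only finitely many sets $A$ with $\emptyset\subsetneq A\subsetneq\mathcal N_o$, inequality \eqref{ineq:strict-comparison} yields a single $\delta>0$ with $\mathbf P[N(o)\cap A\neq\emptyset]+\delta\le\mathbf Q[N(o)\cap A\neq\emptyset]$ for all such $A$.

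\emph{Step 2: a monotone sprinkled family, strictly subcritical at $\eta=0$.} For $\eta\in[0,1]$ let $\mathbf P_\eta$ be the law on $\mathcal P(\mathcal N_o)$ obtained by sampling $N_0\sim\mathbf P$ and then adding, independently, each $v\in\mathcal N_o\setminus N_0$ with probability $\eta$, and let $\mathbb P_\eta$ be the corresponding product measure, so that $\mathbb P_0=\mathbb P$ and $(\mathbb P_\eta)_{\eta}$ is stochastically increasing under the obvious threshold coupling. A direct computation gives $\mathbf P_\eta[N(o)\cap A\neq\emptyset]=1-\mathbf P[N_0\cap A=\emptyset](1-\eta)^{|A|}\le\mathbf P[N(o)\cap A\neq\emptyset]+|A|\,\eta$, so with $\eta_0:=\delta/(2d)$ the pair $(\mathbf P_\eta,\mathbf Q)$ satisfies \eqref{local-comparison} for every $\eta\le\eta_0$. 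By Theorem~\ref{thm:local-to-global}, $\mathbb P_\eta[o\rightsquigarrow\partial B_n]\le\mathbb Q[o\rightsquigarrow\partial B_n]$ for all $n$, and letting $n\to\infty$ (the events $\{o\rightsquigarrow\partial B_n\}$ decrease to $\{o\rightsquigarrow\infty\}$, all out-degrees being at most $2d$) gives $\mathbb P_\eta[o\rightsquigarrow\infty]=0$ for every $\eta\le\eta_0$. Hence the critical sprinkling parameter of $(\mathbb P_\eta)_\eta$ is at least $\eta_0>0$: the model $\mathbb P=\mathbb P_0$ lies strictly inside the subcritical phase of this family.

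\emph{Step 3: sharpness.} It remains to upgrade ``strictly subcritical in $(\mathbb P_\eta)_\eta$'' to exponential decay of $\mathbb P_0[o\rightsquigarrow\partial B_n]$. This is exactly a sharpness-of-the-phase-transition statement for the monotone family $(\mathbb P_\eta)_\eta$, and it can be obtained by importing the Duminil-Copin--Tassion argument (or the OSSS-based argument of Duminil-Copin--Raoufi--Tassion): these apply to any bounded-degree, i.i.d., monotone percolation model in which the parameter drives an i.i.d.\ family of edge-variables and the connection event is increasing — precisely the situation here, working conditionally on the background field $N_0$, under which the sprinkle is genuine Bernoulli oriented percolation — and they give $\mathbb P_\eta[o\rightsquigarrow\partial B_n]\le e^{-c(\eta)n}$ for every $\eta$ below the critical sprinkling parameter, in particular for $\eta=0$. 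If a self-contained proof is preferred, one instead runs an Aizenman--Barsky-type ghost argument directly: revealing the $\mathbb P_0$-cluster $\G$ of $o$, then the sprinkle edges leaving it, then the fresh $\mathbb P_0$-clusters these open, and iterating, one sees that the a.s.\ finite $\mathbb P_{\eta_0}$-cluster of $o$ dominates the total progeny of a branching process whose mean offspring is of order $\eta_0\,\mathbb E_{\mathbb P}[|\G|]$, so $\mathbb P_{\eta_0}[o\rightsquigarrow\infty]=0$ forces $\mathbb E_{\mathbb P}[|\G|]<\infty$; finite expected cluster size is then converted to exponential one-arm decay by a Hammersley-type subadditivity argument, using that $x\mapsto\mathbb P[o\rightsquigarrow x]$ is supermultiplicative along rays and summable.

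\emph{Expected main obstacle.} Steps 1 and 2 are routine bookkeeping; the content is entirely in Step 3. In the ``cite sharpness'' route the work is to check that the standard proofs — invariably stated for undirected Bernoulli bond percolation on $\mathbb Z^d$ — transcribe to the directed, vertex-indexed setting: they do, since they use only stochastic monotonicity together with an i.i.d.\ edge parameter, but one must run the argument conditionally on the background field $N_0$ and verify that the resulting constants are uniform in that field. In the self-contained route the delicate step is making the ghost exploration rigorous — controlling the overlaps among the successively revealed $\mathbb P_0$-clusters so that the comparison with a true branching process (hence the implication ``$\mathbb P_{\eta_0}$ subcritical $\Rightarrow\mathbb E_{\mathbb P}[|\G|]<\infty$'') holds in the claimed direction; this is the heart of the Aizenman--Barsky argument, and the final passage from finite susceptibility to exponential decay, while more routine, still requires a clean supermultiplicativity statement for directed connection probabilities.
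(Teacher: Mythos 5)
Your high-level plan — convert the strict local inequality into finite slack and then convert ``strictly subcritical'' into exponential decay — is exactly the plan the paper uses, and your Steps~1 and~2 are correct bookkeeping (the estimate $\mathbf P_\eta[N(o)\cap A\neq\emptyset]\le\mathbf P[N(o)\cap A\neq\emptyset]+|A|\eta$, and the passage to $\mathbb P_\eta[o\rightsquigarrow\infty]=0$ via Theorem~\ref{thm:local-to-global} and K\"onig's lemma, are fine). However, the implementation diverges in a way that matters. The paper does not sprinkle $\mathbf P$ upward; it thins $\mathbf Q$ downward, replacing $\mathbf Q$ by $\mathbf Q_p:=p\,\mathbf Q+(1-p)\delta_\emptyset$ and using the strict gap plus continuity to find $p<1$ with $\mathbf P\preceq_{\mathrm{local}}\mathbf Q_p$. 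This choice is precisely what Grimmett's Equation~(5.3) (Proposition~\ref{prop_exp}) is tailored for: an independent Bernoulli site-thinning of a non-percolating model has exponentially decaying one-arm probabilities. One then applies Theorem~\ref{thm:local-to-global} once and is done. No new sharpness theorem needs to be proved or adapted.

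Your Step~3 is the place where your route is not merely longer but actually has a gap as written. The family $(\mathbb P_\eta)_\eta$ is a Bernoulli sprinkle on top of an independent random background $\omega_0\sim\mathbb P_0$, not a standard Bernoulli percolation, so the Duminil-Copin--Tassion / OSSS / Aizenman--Barsky results do not apply out of the box. Your fix --- ``work conditionally on $N_0$'' --- runs into two concrete problems: (i) conditionally on $\omega_0$ and at $\eta=0$ the model is deterministic, so a quenched sharpness statement gives no information about $\mathbb P_0[o\rightsquigarrow\partial B_n]$; what one would really use is the annealed inequality $\mathbb P_0\le\mathbb P_\eta$ for a fixed $\eta\in(0,\eta_0)$ together with an annealed exponential bound on $\mathbb P_\eta$, but that annealed bound is precisely what needs a new proof in this nonstandard family; (ii) the quenched decay rate $c(\eta,\omega_0)$ and the quenched critical parameter $\eta_c(\omega_0)$ are random, and uniformity over $\omega_0$ (which you flag but do not establish) is not automatic. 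Your alternative ``ghost'' route has a similar soft spot: the iteratively revealed $\mathbb P_0$-clusters overlap, so the claimed domination ``$\mathbb P_{\eta_0}$-cluster $\succeq$ total progeny of a branching process with mean offspring $\asymp\eta_0\,\mathbb E_{\mathbb P}|\mathcal C|$'' goes in the wrong direction for a naive exploration (overlaps only reduce the cluster, so one would be proving a lower bound where an upper bound is needed for the branching comparison to force $\mathbb E_{\mathbb P}|\mathcal C|<\infty$); the actual Aizenman--Barsky argument is a differential-inequality argument, not a branching comparison, and would still need to be adapted to the directed, vertex-indexed, frozen-background setting. In short, the paper's proof sidesteps all of this by choosing the interpolation so that the subcritical-plus-independent-thinning result of Grimmett applies verbatim, whereas your proposal pushes the whole difficulty into Step~3 and leaves it unresolved.
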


The proof of Theorem \ref{thm:strict-comparison} is based on the following general principle that is proved in~\cite{grimmett_dependent_1998}. 

\begin{proposition}[{\cite[Equation (5.3)]{grimmett_dependent_1998}}]\label{prop_exp}
    Let $\mathbf{Q}$ be such that $\mathbb{Q}[o \rightsquigarrow \infty] = 0$. For $p \in [0,1]$ define $\mathbf{Q}_p := p \mathbf{Q} + (1-p) \delta_{\emptyset}$ and denote by $\mathbb{Q}_p$ the law of the associated percolation model.  Then, for any $p \in [0,1)$ there exists a constant $c = c(p,\mathbf{Q})>0$ such that for all $n \in \N$,
    \begin{align*}
        \mathbb{Q}_p[o \rightsquigarrow \partial B_n] \leq {\rm e}^{-cn}. 
    \end{align*}
\end{proposition}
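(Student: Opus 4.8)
The plan is to recognise Proposition~\ref{prop_exp} as an instance of the \emph{sharpness of the phase transition} for the monotone one--parameter family $(\mathbb{Q}_p)_{p\in[0,1]}$, so that the conclusion is really a statement about the whole subcritical regime rather than about the special hypothesis $\theta(\mathbf{Q})=0$. First I would record monotonicity: for $p\le p'$ couple $\mathbb{Q}_p$ and $\mathbb{Q}_{p'}$ by sampling a configuration from $\mathbb{Q}_{p'}$ and then, independently at each vertex, replacing $N(u)$ by $\emptyset$ with probability $1-p/p'$; since $\tfrac{p}{p'}\mathbf{Q}_{p'}+(1-\tfrac{p}{p'})\delta_\emptyset=p\mathbf{Q}+(1-p)\delta_\emptyset=\mathbf{Q}_p$, this is a valid coupling under which out--clusters only shrink. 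Hence $p\mapsto\mathbb{Q}_p[o\rightsquigarrow\partial B_n]$ and $p\mapsto\theta(\mathbf{Q}_p)$ are nondecreasing, and $\theta(\mathbf{Q}_1)=\theta(\mathbf{Q})=0$ forces $\theta(\mathbf{Q}_p)=0$ for all $p\in[0,1]$, i.e.\ the percolation threshold of the family is $p_c:=\inf\{p:\theta(\mathbf{Q}_p)>0\}=1$. It therefore suffices to prove the general assertion: for every $p<p_c$ there is $c=c(p)>0$ with $\mathbb{Q}_p[o\rightsquigarrow\partial B_n]\le\mathrm e^{-cn}$.

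To prove this I would run the standard differential--inequality / decision--tree machinery (Aizenman--Barsky, Menshikov, Duminil-Copin--Tassion) with the dilution parameter $p$ in the role of a Bernoulli parameter. Realise $\mathbb{Q}_p$ from an i.i.d.\ field $(\xi_u,\eta_u)_{u\in\Z^d}$ with $\xi_u\sim\mathrm{Ber}(p)$ and $\eta_u\sim\mathbf{Q}$ independent, putting $N(u)=\eta_u\mathbf{1}_{\{\xi_u=1\}}$; then $\theta_n(p):=\mathbb{Q}_p[o\rightsquigarrow\partial B_n]$ depends only on coordinates in $B_n$ (since $\{o\rightsquigarrow\partial B_n\}$ equals the existence of a directed open path from $o$ to $\partial B_n$ all of whose edge--tails lie in $B_n$), is nondecreasing in the $\xi$'s, and Russo's formula gives $\theta_n'(p)=\sum_{u\in B_n}\mathbb{Q}_p[u\text{ pivotal}]$, pivotality being taken with respect to the activity bit $\xi_u$. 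For a finite $S\ni o$ set
\[
\varphi_p(S)=\sum_{u\in S}\mathbb{Q}_p\big[o\overset{S}{\rightsquigarrow}u\big]\,\cdot\,p\!\!\sum_{v\in\mathcal{N}_u\setminus S}\!\!\mathbf{Q}[\,v-u\in N(o)\,]
\]
(the independence $\{o\overset{S}{\rightsquigarrow}u\}\perp N(u)$ used here holds because such a path can be taken simple and hence never inspects $N(u)$), and $\tilde p_c=\sup\{p:\exists\text{ finite }S\ni o\text{ with }\varphi_p(S)<1\}$. One then proves the two halves of the dichotomy: (i) if $p<\tilde p_c$, fix $S$ with $\varphi_p(S)<1$ and iterate the exit decomposition of the out--cluster across nested translates of $S$ — each exit contributing a factor $\varphi_p(S)$ via the van den Berg--Kesten inequality for the product measure $\mathbb{Q}_p$, together with the self--avoiding (hence witness--disjoint) structure of directed paths — to get $\theta_n(p)\le C\,\varphi_p(S)^{\,n/\operatorname{diam}S}$; (ii) if $p>\tilde p_c$, a Duminil-Copin--Tassion type differential inequality (comparing $\sum_u\mathbb{Q}_p[u\text{ pivotal}]$ with the $\varphi_p(S)$'s through an OSSS/decision--tree estimate applied to the cluster exploration) integrates to $\liminf_n\theta_n(p)>0$, i.e.\ $\theta(\mathbf{Q}_p)>0$. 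Together with the trivial inclusions these force $\tilde p_c=p_c$, so every $p<p_c$ falls under case (i) and the exponential bound follows.

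The main obstacle is not this skeleton but the bookkeeping needed to transfer the classical arguments to the present model. First, connectivity is via \emph{directed} out--paths, so every occurrence of ``cluster exploration'', ``pivotal vertex'' and ``cut of a finite cluster'' must be phrased in terms of the out--component and its directed boundary; this is exactly where it helps that $\{o\rightsquigarrow\partial B_n\}$ is $\sigma((N(u))_{u\in B_n})$--measurable and that directed paths may be taken self--avoiding, yielding disjoint witness sets for the BK step. Second, the single--vertex randomness is not a product of independent Bernoulli edge variables but a general law $\mathbf{Q}$ on $\mathcal{P}(\mathcal{N}_o)$ carried by the quenched field $\eta$, so one must check that this extra disorder — which enters all estimates only through the quantities $\mathbf{Q}[N(o)\cap A\neq\emptyset]$ — is harmless, e.g.\ by conditioning on $\eta$, applying the monotone--in--$\xi$ machinery, and averaging. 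None of this introduces a new idea, but it is the real content of the proof; at the level of the present paper it is legitimate to invoke the general principle of~\cite{grimmett_dependent_1998} directly.
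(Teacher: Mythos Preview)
The paper does not prove Proposition~\ref{prop_exp} at all: it is stated with attribution to \cite[Equation~(5.3)]{grimmett_dependent_1998} and then used as a black box in the proof of Theorem~\ref{thm:strict-comparison}. So there is no ``paper's own proof'' to compare against, and your closing remark---that at the level of this paper one may simply invoke the cited general principle---\emph{is} the paper's approach.

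Your sketch is nevertheless a correct route to the result. Recasting $(\mathbb{Q}_p)_{p\in[0,1]}$ as a monotone one-parameter family with $p_c=1$ and then appealing to sharpness of the phase transition (Menshikov/Aizenman--Barsky, or the Duminil-Copin--Tassion $\varphi_p(S)$ argument with the dilution bit $\xi_u$ playing the Bernoulli role) does yield exponential decay throughout $[0,1)$. The technical points you flag---measurability of $\{o\rightsquigarrow\partial B_n\}$ with respect to $(N(u))_{u\in B_n}$, the BK inequality for the vertex-product measure, and the fact that directed self-avoiding paths provide disjoint witnesses---are all valid in this setting. The only part where one should be slightly careful is step~(ii): the OSSS/differential-inequality half is needed to force $\tilde p_c\ge 1$ (via the contrapositive, since $\theta(\mathbf{Q}_p)=0$ for all $p\le 1$), and carrying the randomized-algorithm estimate through the quenched field $\eta$ is routine but should be stated, e.g.\ by running the exploration algorithm that reveals $(\xi_u,\eta_u)$ together at each newly reached vertex.

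Compared with the source, your argument is heavier but also more self-contained: the result in \cite{grimmett_dependent_1998} is obtained by a more elementary finite-range renormalisation/subadditivity computation exploiting the extra $(1-p)$ chance of a dead end at every explored vertex, without invoking the full sharpness machinery. Either route is acceptable here; the paper simply chooses to cite rather than reprove.
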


\begin{proof}[Proof of Theorem~\ref{thm:strict-comparison}]
It suffices to note that, due to the strict comparison in~\eqref{ineq:strict-comparison} and continuity, there exists $p \in (0,1)$ such that \eqref{local-comparison} holds for $\mathbf{P}$ and $\mathbf{Q}_p$. By Theorem~\ref{thm:local-to-global} and Proposition~\ref{prop_exp} this implies that
\begin{align*}
    \mathbb{P}[o \rightsquigarrow \partial B_n] \leq \mathbb{Q}_p[o \rightsquigarrow \partial B_n] \leq {\rm e}^{-cn} 
\end{align*}
for some $c> 0$, as claimed. 
\end{proof}


\section{Applications}\label{sec_ex}
In this section, we denote by $\mathbf P_p$ the {\em local i.i.d.\ law} where each vertex in $\Z^d$ draws a directed edge towards each of its nearest neighbors with probability $p\in [0,1]$ independently among said neighbors and also independently of the configurations of the other vertices. We denote by $\mathbb P_p$ the law of the associated directed percolation model.

\subsection{Directed nearest-neighbor models}\label{sec:k-nn-calculation} 
Consider  $\mathbb{Z}^d$ and a parameter $p\in [0,1]$ to which we associate two further parameters
\begin{equation}\label{kepsdef}
    k:=\lfloor 2dp\rfloor\in \{0,\dots,2d\}\quad  \text{ and }\quad  \varepsilon:=2dp-k\in [0,1).
\end{equation}
Given these parameters, we define a directed edge-percolation model, which we call the {\em directed  $2dp$-nearest-neighbor graph} $2dp$-DnG as follows\footnote{Note that, in the earlier work~\cite{JKLT} on degree-constraint nearest-neighbor models, only the integer cases, with $\eps=0$, were treated.}. Each vertex $v\in\mathbb{Z}^d$ chooses, independently and uniformly at random, $k$ of its $2d$ nearest neighbors $\mathcal{N}_v$ and we open the directed edges towards these neighbors. Additionally, with probability $\varepsilon$ and independently of any other vertex, each vertex selects one more neighbor uniformly at random from the remaining unchosen neighbors, and an additional directed edge is opened toward that neighbor. In other words, independently at each vertex, the local law corresponds to $k$-DnG with probability $1-\varepsilon$ and to $(k+1)$-DnG with probability $\varepsilon$. In the remainder of this section, we denote the above-defined {\em local $2dp$-DnG law} by $\mathbf{Q}_{p}$ and the corresponding probability measure on the configuration set $\Omega := \{0,1\}^\mathcal{E}$, where $\mathcal{E} := \{(x,y) \in \Z^d\times \Z^d\colon \|x-y\|_{1} = 1\}$ stands for the set of directed edges of $\mathbb{Z}^d$, by $\mathbb{Q}_{p}$. As usual, a directed edge $e$ is called \textit{open} (respectively \textit{closed}) in the configuration $\omega \in \Omega$ if $\omega(e) = 1$ (respectively $\omega(e)=0$). 

We now want to use Theorem~\ref{thm:local-to-global} and a comparison to Bernoulli bond percolation to show that degree-constrained models percolate in certain parameter regimes. For this, we consider convex combinations of $2dp$-DnGs defined as follows.
\begin{definition}(Exchangeability)\label{def_exchange}
    We say that a probability measure $\mathbf P$ on $\mathcal{N}_o$ is {\sl exchangeable} if, for all $k \in \{0,\ldots,2d\}$ with $\mathbf P[ |N(o)|=k ] > 0$, the conditional measure $\mathbf P\big[ \cdot \big| |N(o)|=k\big]$ is a uniform distribution on $\{A\subset\cN_o\colon |A|=k\}$. Let $\mathbf E$ denote the expectation associated with $\mathbf P$.
\end{definition}

Then, the following result states that locally any convex combination of $2dp$-DnGs can be dominated from above and below by other nearest-neighbor local laws with the same expected degree. More precisely, among all probability measures $\mathbb{P}$ for which the marginal distribution $\mathbf{P}$ is exchangeable and the average degree equals $\mathbf{E}\left[ |N(o)| \right]=2dp$, the \emph{all-or-nothing law} $\mathbf P_p^{\mathrm{aon}}$, i.e., the measure for which
    \begin{equation*}
        \mathbf P_p^{\mathrm{aon}}[|N(o)| = 2d] = p\qquad\text{and}\qquad  \mathbf P_p^{\mathrm{aon}}[|N(o)| = 0] = 1-p,\qquad p\in [0,1],
    \end{equation*}
    is the least likely to percolate, whereas the local $2dp$-DnG law, which distributes its mass on $k$ and $k+1$, has the best chance for percolation. 

\begin{proposition}(Domination for exchangeable measures)\label{prop-aonk}
For all exchangeable probability measures $\mathbf P$ on $\mathcal{N}_o$ with $\mathbf E\left[ |N(o)| \right] = 2dp$, one has     \begin{equation}\label{eq:comparison measures}
         \mathbf P_p^{\mathrm{aon}}[N(o) \cap A\neq\emptyset] \leq \mathbf{P}[N(o) \cap A\neq\emptyset]\leq \mathbf{Q}_{p}[N(o) \cap A\neq\emptyset],\quad\text{for all $A \subset \mathcal{N}_o$}.
    \end{equation}
\end{proposition}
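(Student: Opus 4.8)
The plan is to use exchangeability to collapse the claim to a one–dimensional statement about the law of the degree $|N(o)|$, and then apply two Jensen-type inequalities with affine functions tailored to the supports of $\mathbf Q_p$ and $\mathbf P_p^{\mathrm{aon}}$. Since $\mathbf P$, $\mathbf Q_p$ and $\mathbf P_p^{\mathrm{aon}}$ are all exchangeable, $\mathbf P[N(o)\cap A\neq\emptyset]$ depends on $A$ only through $j:=|A|$: conditionally on $|N(o)|=i$ the set $N(o)$ is a uniform $i$-subset of $\mathcal N_o$, whence
\[
\mathbf P[N(o)\cap A=\emptyset]=\mathbf E\big[f_j(|N(o)|)\big],\qquad f_j(i):=\frac{\binom{2d-j}{i}}{\binom{2d}{i}}=\prod_{\ell=0}^{i-1}\frac{2d-j-\ell}{2d-\ell},
\]
with the convention $f_j(i):=0$ once $i>2d-j$. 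So \eqref{eq:comparison measures} is equivalent to the reversed chain $\mathbf E_{\mathbf Q_p}[f_j(|N(o)|)]\le \mathbf E_{\mathbf P}[f_j(|N(o)|)]\le \mathbf E_{\mathbf P_p^{\mathrm{aon}}}[f_j(|N(o)|)]$ for every $j\in\{0,\dots,2d\}$; this is exactly the assertion that $\mathbf Q_p$ is minimal and $\mathbf P_p^{\mathrm{aon}}$ maximal in the convex order among exchangeable laws with mean degree $2dp$, provided $f_j$ is convex.

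Thus the crux is to prove that $f_j$ is convex and non-increasing on $\{0,1,\dots,2d\}$ for each $j\ge1$ (the case $j=0$ is trivial, $f_0\equiv1$). Monotonicity is immediate from the product formula. For convexity I would compute second differences: writing $f_j(i+1)=f_j(i)\,r_{i+1}$ with $r_{i+1}=(2d-j-i)/(2d-i)$, one gets $f_j(i)+f_j(i+2)-2f_j(i+1)=f_j(i)\big(1-r_{i+1}(2-r_{i+2})\big)$, and the substitution $t=2d-i$ turns $r_{i+1}(2-r_{i+2})\le1$ into $j(1-j)\le0$, which holds; one then checks separately that extending $f_j$ by $0$ past $i=2d-j$ preserves convexity. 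I expect this elementary but slightly fiddly second-difference computation to be the only genuine obstacle in the proof.

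Given convexity, the two bounds are short. For the upper bound, set $k=\lfloor 2dp\rfloor$, so that $\mathbf Q_p$ is supported on $\{k,k+1\}$ with $\mathbf E_{\mathbf Q_p}[|N(o)|]=2dp$; by discrete convexity there is an affine $\ell$ with $\ell\le f_j$ on $\{0,\dots,2d\}$ and $\ell=f_j$ on the support of $\mathbf Q_p$ (take the secant through $(k,f_j(k))$ and $(k+1,f_j(k+1))$, which lies below $f_j$ at every integer, there being none strictly between $k$ and $k+1$). Then, for any exchangeable $\mathbf P$ with mean degree $2dp$,
\[
\mathbf E_{\mathbf P}[f_j(|N(o)|)]\ \ge\ \mathbf E_{\mathbf P}[\ell(|N(o)|)]\ =\ \ell(2dp)\ =\ \mathbf E_{\mathbf Q_p}[\ell(|N(o)|)]\ =\ \mathbf E_{\mathbf Q_p}[f_j(|N(o)|)].
\]
For the lower bound, let $L$ be the chord of $f_j$ from $(0,1)$ to $(2d,0)$ (note $f_j(2d)=0$ for $j\ge1$); convexity gives $f_j\le L$ on $[0,2d]$, while $\mathbf P_p^{\mathrm{aon}}$ is supported on $\{0,2d\}$ with mean $2dp$, so $\mathbf E_{\mathbf P_p^{\mathrm{aon}}}[f_j(|N(o)|)]=L(2dp)=\mathbf E_{\mathbf P}[L(|N(o)|)]\ge\mathbf E_{\mathbf P}[f_j(|N(o)|)]$. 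Combining the two displays and using $\mathbf P[N(o)\cap A\neq\emptyset]=1-\mathbf E[f_j(|N(o)|)]$ yields \eqref{eq:comparison measures}; the degenerate cases ($2dp\in\Z$, where $\mathbf Q_p$ is a point mass, and $A\in\{\emptyset,\mathcal N_o\}$) are covered by the same argument with a supporting line in place of the secant.
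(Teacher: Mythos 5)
Your proof is correct, and it differs from the paper's in several worthwhile ways. Both proofs rest on the same core fact---that, with $j=|A|$, the map $i\mapsto \binom{2d-j}{i}/\binom{2d}{i}$ is convex on $\{0,\dots,2d\}$ (equivalently, $n\mapsto 1-\binom{2d-n}{\ell}/\binom{2d}{\ell}$ is concave, as the paper writes it)---but the two arguments diverge in how they establish and exploit it.

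\textbf{Convexity.} You prove it by a direct second-difference computation, factoring $f_j(i)\bigl(1-r_{i+1}(2-r_{i+2})\bigr)$ and reducing to $j(1-j)\le 0$. This is clean and entirely elementary. The paper instead reduces to convexity of $m\mapsto\binom{m}{\ell}$ and proves the discrete gradient inequality via Pascal's rule and a coupling of two uniform random variables on disjoint integer intervals. Both are short; yours is arguably the more routine, theirs the more elegant.

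\textbf{Upper bound (comparison with $\mathbf Q_p$).} You use a supporting-secant argument: since the only integers in $[k,k+1]$ are $k$ and $k+1$, the secant through those two points lies weakly below $f_j$ at \emph{every} integer, so $\mathbf E_{\mathbf P}[f_j]\ge \ell(2dp)=\mathbf E_{\mathbf Q_p}[f_j]$. This is a one-shot argument. The paper instead performs an iterative mass-transport, repeatedly moving mass from the extremes of the degree distribution toward its center while preserving the mean, shrinking the range of the support by at least one at each step until only $\{k,k+1\}$ remains, invoking concavity at each step. The two are conceptually the same (extremality of $\mathbf Q_p$ in the convex order among exchangeable laws with given mean degree), but your secant argument gets there in a single step.

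\textbf{Lower bound (comparison with $\mathbf P_p^{\mathrm{aon}}$).} You again use convexity, bounding $f_j$ by the chord from $(0,1)$ to $(2d,0)$ and evaluating the chord at the mean $2dp$. The paper's argument here is simpler and bypasses convexity entirely: for $v\in A$, exchangeability gives $\mathbf P[v\in N(o)]=\mathbf E[|N(o)|]/(2d)=p=\mathbf P_p^{\mathrm{aon}}[N(o)\cap A\neq\emptyset]$, and $\mathbf P[N(o)\cap A\neq\emptyset]\ge\mathbf P[v\in N(o)]$ is monotonicity. Your version is more unified (both bounds come from the same convex-order lens), while the paper's is more economical for this half.

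In short: the proposal is correct, covers the boundary cases ($j\in\{0,2d\}$, $2dp\in\Z$, the cutoff at $i=2d-j$), and takes a genuinely different route for both the convexity proof and the reduction to it.
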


\begin{proof}
    We start with the proof of the second inequality in \eqref{eq:comparison measures}.
    Define $\alpha_n = \mathbf P[|N(o)|=n]$ and write $\ell=|A|$. Conditionally on $|N(o)|=n$, we have that
    \begin{align*}
        f(n,\ell) :=  \mathbf P\big[N(o) \cap A\neq\emptyset \big| |N(o)|=n\big] 
        =
        1 - \mathbf P\big[N(o) \cap A = \emptyset \big| |N(o)|=n\big] 
        = 1 - \tfrac{\binom{2d-n}{\ell}}{\binom{2d}{\ell}}.
    \end{align*}
    We claim that the mapping $n\mapsto f(n,\ell)$ is concave on its domain $\{0,\ldots, 2d\}$ for any $\ell\in\{0,\ldots, 2d\}$ which we show at the end of the proof.  In particular, it satisfies the discrete concavity criteria
    \begin{align*}
         \frac{1}{2} f(n-i,\ell) + \frac{1}{2} f(n+i,\ell) &\leq f(n,\ell) \quad \text{and}\\
        \frac{1}{2} f(n,\ell) + \frac{1}{2} f(n+2i+1,\ell) &\leq \frac{1}{2} f(n+i,\ell) + \frac{1}{2} f(n+i+1,\ell).
    \end{align*}
    By the law of total probability, we have that
    \be\label{eq:totprob}
        \mathbf P[  N(o) \cap A\neq\emptyset] = \sum_{n=0}^{2d} \mathbf P\big[ N(o) \cap A\neq\emptyset \big| |N(o)|=n\big] \alpha_n
        = \sum_{n=0}^{2d} f(n,\ell) \alpha_n .
    \ee
    Now, assume that $\mathcal{R}(\mathbf P) \coloneqq \max\{n\colon\alpha_n > 0 \} - \min\{n\colon \alpha_n > 0 \} \geq 2$. We define a new probability  measure $\mathbf P^\prime$ with $\mathcal{R}(\mathbf P^\prime) \leq \mathcal{R}(\mathbf P) - 1$ and $\mathbf E^\prime \left[ |N(o)| \right] = \mathbf E\left[ |N(o)| \right]$ (where $\mathbf E^\prime$ denotes the expectation associated with $\mathbf P^\prime$) such that
    \begin{align*}
        \mathbf P^\prime[ N(o) \cap A\neq\emptyset] \geq \mathbf P[ N(o) \cap A\neq\emptyset],\qquad\text{ for all $A\subset \cN_o$}.
    \end{align*}
   Let $n_+\coloneqq \max\{n\colon\alpha_n > 0 \} $ and $n_- \coloneqq \min\{n\colon \alpha_n > 0 \}$, and define $\Tilde n:= (n_++n_-)/2$. If $n_+ + n_-$ is even, define the probability measure $\mathbf P^\prime$ by
    \begin{align*}
        \mathbf P^{\prime}[ |N(o)| = n] =  \begin{cases}
            \alpha_n - \min\{\alpha_{n_+}, \alpha_{n_-}\} & \text{ if } n = n_+ \text{ or } n= n_-,\\
            \alpha_n + 2 \min\{\alpha_{n_+}, \alpha_{n_-}\} & \text{ if } n = \Tilde{n},\\
            \alpha_n & \text{ else. }
         \end{cases} 
    \end{align*}
    If $n_+ + n_-$ is odd, define  the probability measure $\mathbf P^\prime$ by
    \begin{align*}
        \mathbf P^{\prime}[ |N(o)| = n] =  \begin{cases}
            \alpha_n - \min\{\alpha_{n_+}, \alpha_{n_-}\} & \text{ if } n = n_+ \text{ or } n= n_-,\\
            \alpha_n + \min\{\alpha_{n_+}, \alpha_{n_-}\} & \text{ if } n = \Tilde{n}+\frac{1}{2},\\
            \alpha_n + \min\{\alpha_{n_+}, \alpha_{n_-}\} & \text{ if } n = \Tilde{n}-\frac{1}{2},\\
            \alpha_n & \text{ else. } 
         \end{cases} 
    \end{align*}
    It readily follows that $\mathbf E^\prime \left[ |N(o)| \right] = \mathbf E\left[ |N(o)| \right]$ and that $\cR(\mathbf P^\prime) \leq \cR(\mathbf P)-1$. The fact that
    \begin{align}\label{eq_1}
        \mathbf P^\prime[N(o) \cap A\neq\emptyset] \geq \mathbf P[ N(o) \cap A\neq\emptyset ]
    \end{align}
    follows from~\eqref{eq:totprob} and the concavity of $n \mapsto f(n,\ell)$. Applying this idea inductively, we see that there exists a measure $\mathbf P^\prime$ with $\cR(\mathbf P^\prime)\leq 1$ such that $\mathbf P\left[|N(o)|\right] = \mathbf E^\prime\left[|N(o)|\right]$ and which satisfies~\eqref{eq_1} for all $A\subset \cN_o$. The conditions $\cR(\mathbf P^\prime)\leq 1$ and $\mathbf E\left[|N(o)|\right] = \mathbf E^\prime\left[|N(o)|\right]$ directly imply that $\mathbf P^\prime = \mathbf Q_{p}$, which proves the second inequality of~\eqref{eq:comparison measures}.\\
    
    To conclude the second inequality in \eqref{eq:comparison measures}, it remains to prove the claim that the mapping $n\mapsto f(n,\ell)$ is concave on its domain $\{0,\ldots, 2d\}$ for any $\ell\in\{0,\ldots, 2d\}$. We define $g(n,\ell):=1-f(n,\ell)$.  It is clear that showing that the mapping $n\mapsto f(n,\ell)$ is concave on its domain $\{0,,\ldots, 2d\}$ is equivalent to showing that the mapping $n\mapsto g(n,\ell) = \binom{2d-n}{\ell} / \binom{2d}{\ell}$ is convex on the same domain. In turn, it suffices to show that the mapping $n\mapsto \binom{n}{\ell}$ is convex on $\{0, \ldots, 2d\}$, for any $\ell\in\{0,\ldots, 2d\}$. We extend the mapping $n\mapsto \binom{n}{\ell}$ to $\R_+$ by 
    \be 
    \binom{b}{a}=\frac{\Gamma(b+1)}{\Gamma(a+1)\Gamma(b-a+1)}\qquad \text{for }b\geq a\geq 0, 
    \ee 
    and $\binom{b}{a}=0$ for $0\leq b<a$, where $\Gamma$ is the gamma function, and show that $x\mapsto \binom{x}{\ell}$ is convex on $[0,2d]$ for any $\ell\in\{0,\ldots, 2d\}$. The case $\ell=0$ is clear, so suppose $\ell\in[2d]$. By using that $\frac{\mathrm d}{\mathrm d x}\Gamma(x)=\Gamma(x)\psi^{(0)}(x)$, where $\psi^{(0)}$ is the polygamma function, and with $\psi^{(k)}(x)=\frac{\mathrm d^k}{\mathrm d x^k}\psi^{(0)}(x)$, we obtain 
    \be 
    \frac{\mathrm d^2}{\mathrm d x^2}\binom{x}{\ell}=\binom{x}{\ell}\big[(\psi^{(0)}(x+1)-\psi^{(0)}(x-\ell+1))^2+\psi^{(1)}(x+1)-\psi^{(1)}(x-\ell+1)\big].
    \ee 
    Using the recurrence relation 
    \be 
    \psi^{(k)}(x+1)=\psi^{(k)}(x)+\frac{(-1)^kk!}{x^{k+1}}, 
    \ee 
    we arrive at 
    \be 
    \frac{\mathrm d^2}{\mathrm d x^2}\binom{x}{\ell}=\binom{x}{\ell}\bigg[\bigg(\sum_{k=0}^{\ell-1} \frac{1}{x-k}\bigg)^2-\sum_{k=0}^{\ell-1} \frac{1}{(x-k)^2}\bigg]\geq 0, 
    \ee 
    so that we arrive at the desired result.\\
    
    To prove the first inequality of~\eqref{eq:comparison measures}, let $A \subset \cN_o$ be non-empty and let $v\in A$. Then, for every exchangeable measure $\mathbf P$ on $\{0,1\}^{\cN_o}$ with $\mathbf E\left[ |N(o)| \right] = 2dp$ one has
    \begin{align*}
        \mathbf P[ N(o) \cap A\neq\emptyset] &\geq \mathbf P[ v \in N(o)]
        = \sum_{n=0}^{2d} \mathbf P[ v\in N(o) \big| |N(o)|=n] \mathbf P[ |N(o)|=n]\\
        &
         = \sum_{n=0}^{2d} \frac{n}{2d} \mathbf P[ |N(o)|=n] = \frac{1}{2d} \mathbf E\left[ |N(o)| \right] = p = \mathbf P_p^{\mathrm{aon}}[ N(o) \cap A\neq\emptyset]. \qedhere
    \end{align*}
\end{proof}
The above result now implies the anticipated domination of degree-constrained models from below by independent Bernoulli bond percolation, where we recall the local i.i.d.\ law $\mathbf P_p$ from the beginning of Section~\ref{sec_ex} and the local $2dp$-DnG law $\mathbf Q_p$ from the start of Section~\ref{sec:k-nn-calculation}.

\begin{corollary}(Degree constraints help percolation)\label{prop:comparison-k-dng-bernoulli}
    For all $p \in [0,1]$, 
    \begin{equation*}
        \mathbf{P}_p[N(o) \cap A\neq\emptyset]\leq \mathbf{Q}_{p}[N(o) \cap A\neq\emptyset],\qquad\text{for all $A \subset \mathcal{N}_o$}.
    \end{equation*}
\end{corollary}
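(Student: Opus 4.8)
The plan is to observe that the Bernoulli local law $\mathbf{P}_p$ is itself an exchangeable measure with the correct average degree, so that the corollary is an immediate consequence of the already-established Proposition~\ref{prop-aonk}.

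First I would check that $\mathbf{P}_p$ is exchangeable in the sense of Definition~\ref{def_exchange}. Under $\mathbf{P}_p$ each of the $2d$ neighbors of the origin is placed in $N(o)$ independently and with the same probability $p$; hence, conditioned on $\{|N(o)| = k\}$, the symmetry of this product measure forces $N(o)$ to be uniformly distributed over the $\binom{2d}{k}$ subsets of $\mathcal{N}_o$ of cardinality $k$, which is precisely the exchangeability requirement. Second, I would record that the expected degree equals $\sum_{v \in \mathcal{N}_o} \mathbf{P}_p[v \in N(o)] = 2dp$ by linearity, matching the normalization in Proposition~\ref{prop-aonk}.

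With these two facts, Proposition~\ref{prop-aonk} applied to $\mathbf{P} = \mathbf{P}_p$ gives, for every $A \subset \mathcal{N}_o$,
\[
    \mathbf{P}_p^{\mathrm{aon}}[N(o) \cap A \neq \emptyset] \leq \mathbf{P}_p[N(o) \cap A \neq \emptyset] \leq \mathbf{Q}_{p}[N(o) \cap A \neq \emptyset],
\]
and the second inequality is exactly the assertion of the corollary (the case $A = \emptyset$ being trivial, since both sides vanish). I do not anticipate any genuine obstacle here: the entire content of the statement is contained in Proposition~\ref{prop-aonk}, and the only thing to be verified — that a product Bernoulli law is exchangeable — is immediate from the definitions. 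If anything were to require a moment's care it would be keeping track of the relation $k = \lfloor 2dp \rfloor$ and $\varepsilon = 2dp - k$ built into $\mathbf{Q}_p$, but this plays no role in the argument, as the comparison passes entirely through the exchangeability and average-degree hypotheses of Proposition~\ref{prop-aonk}.
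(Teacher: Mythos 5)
Your proof is correct and matches the paper's argument exactly: both apply the second inequality of Proposition~\ref{prop-aonk} with $\mathbf{P} = \mathbf{P}_p$, and you have simply spelled out the (immediate) verification that the Bernoulli product law is exchangeable with mean degree $2dp$, which the paper leaves implicit.
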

\begin{proof}
    This is a direct application of the second inequality in \eqref{eq:comparison measures} with $\mathbf P=\mathbf P_p$.
\end{proof}

\begin{remark}
    In the integer case, i.e.\ when $2dp\in\N_0$, the inequality in Corollary~\ref{prop:comparison-k-dng-bernoulli} can also be proved directly by using negative association of the set of edges $N(o)$. 
\end{remark}

Note that in the above statement, the model $\mathbf{P}_p$ refers to a directed version of independent Bernoulli bond percolation. However, as we establish next, the probabilities of one-arm events for directed and undirected Bernoulli bond percolation coincide. To highlight the distinction between directed and undirected connectivity, in the following, we write $\{ o \leftrightsquigarrow \partial B_n \}$ (resp.\ $\{ o \leftrightsquigarrow \infty \}$) for the event that there is a vertex in $\partial B_n$ (resp.\ for all $k \geq 1$ there exists a vertex in $\partial B_k$) that can be reached via an {\em undirected} path from $o$ consisting of open edges. Let $\mathbb P'_p$ denote the law of independent Bernoulli bond percolation restricted to the undirected nearest-neighbor edges of $\Z^d$ with parameter $p\in [0,1]$. The following identity is proved in the proof of~\cite[Lemma~2.5]{CHJK24}.

\begin{lemma}(Directed and undirected i.i.d.\ percolation)\label{lem-dirundir}
For all $p \in (0,1)$ and $n \geq 0$, 
we have 
\[ \P_p[o \rightsquigarrow \partial B_n]= \mathbb P'_p[o \leftrightsquigarrow \partial B_n]. \]
\end{lemma}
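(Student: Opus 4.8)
The plan is to exhibit a measure-preserving coupling between the directed model $\mathbb P_p$ and the undirected model $\mathbb P'_p$ under which the directed one-arm event from the origin and the undirected one-arm event from the origin coincide. The key observation is that in the directed i.i.d.\ model, for each \emph{unordered} nearest-neighbor pair $\{u,v\}$ there are two independent directed edges, $(u,v)$ and $(v,u)$, each open with probability $p$; in the undirected model there is a single edge $\{u,v\}$, open with probability $p$. So a naive identification does not work, and the content of the lemma is that the one-arm probabilities nonetheless agree.

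First I would reduce to a statement about the cluster of the origin explored in a breadth-first manner, exactly as in Section~\ref{sec:exploration-process}. The point is that the event $\{o\rightsquigarrow\partial B_n\}$ is determined by a breadth-first exploration that, at each newly discovered vertex $u$, only ever queries the \emph{outgoing} edges $(u,v)$ for $v\in\mathcal N_u$; it never queries an edge pointing back toward an already-explored vertex, and more importantly it never needs to query the reverse edge $(v,u)$ of an edge it has already used. Concretely: run the BFS from $o$; when vertex $u$ is dequeued, reveal $N(u)$; the edge $(w,u)$ by which $u$ was first reached has a reverse edge $(u,w)$, but revealing whether $w\in N(u)$ is irrelevant since $w$ is already in the explored set. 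Thus the collection of directed edges whose status is actually inspected during the exploration contains \emph{at most one} of the two anti-parallel edges between any nearest-neighbor pair. I would make this precise by defining the exploration as revealing, for the current active vertex $u$, the independent bits $(\omega(u,v))_{v\in\mathcal N_u}$ only for those $v$ not yet explored, and checking that the (random) set of inspected directed edges projects injectively to the set of inspected undirected edges.

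Next I would construct the coupling. Given the undirected configuration $\omega'$, run the undirected BFS exploration of the cluster of $o$ in $B_n$. Each time the exploration inspects an undirected edge $\{u,v\}$ (with $u$ active, $v$ not yet explored) it reveals $\omega'(\{u,v\})\sim\mathrm{Bernoulli}(p)$; I set the corresponding directed bit $\omega(u,v):=\omega'(\{u,v\})$, and I sample all directed edges never inspected by the directed exploration as fresh independent $\mathrm{Bernoulli}(p)$ variables. By the previous paragraph each directed edge receives a value exactly once, and by independence of the i.i.d.\ fields the resulting $\omega$ has law $\mathbb P_p$ while $\omega'$ has law $\mathbb P'_p$. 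Under this coupling the two explorations inspect the same sequence of (undirected) edges and make the same open/closed decisions, so they discover the same set of vertices; in particular $\{o\rightsquigarrow\partial B_n\}$ holds for $\omega$ iff $\{o\leftrightsquigarrow\partial B_n\}$ holds for $\omega'$, which gives the claimed equality of probabilities. (Equivalently, and perhaps cleaner to write up: both probabilities equal $\mathbb P[\text{BFS from }o\text{ reaches }\partial B_n]$ for an abstract process that, at step $t$, flips an independent $p$-coin for each edge from the current frontier to an unexplored vertex.)

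The main obstacle is the bookkeeping in the reduction step: one must argue carefully that the directed BFS never queries both orientations of the same pair, i.e.\ that whenever it would query $(u,v)$ the vertex $v$ is either undiscovered (so $(v,u)$ has not been and will not need to be queried from $v$ before $v$ is processed — and once $v$ is processed, $u$ is already explored so $(v,u)$ is skipped) or already discovered (so $(u,v)$ is skipped). Phrasing the exploration so that "skip edges into the explored set'' is part of the definition, and then checking the injection onto undirected edges, is the crux; once that is in place the coupling and the equality of one-arm probabilities are immediate. A minor point to note is that the lemma is stated for $p\in(0,1)$ but the argument works verbatim for all $p\in[0,1]$; the restriction is harmless.
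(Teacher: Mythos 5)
Your proposal is correct and uses essentially the same argument as the paper: both run the generation-by-generation exploration from Section~\ref{sec:exploration-process}, observe that only edges from the current frontier to unexplored vertices are ever revealed (so at most one of each anti-parallel pair, and each undirected edge at most once), and conclude that the revealed bits are i.i.d.\ Bernoulli($p$) in both models, hence the explorations have the same law. The only stylistic difference is that you spell out an explicit coupling, while the paper phrases it directly as equality in distribution of the exploration process.
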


\begin{remark}(All-or-nothing and i.i.d.\ site percolation)\label{lem-aonsite}
Let us note that, with the same technique as presented in the proof above, we can also recover a relation, first observed in~\cite[Lemma~2.6]{CHJK24}, between the all-or-nothing probability measure $\mathbb{P}_p^{\mathrm{aon}}$ and i.i.d.\ site percolation, which we denote by $\mathbb P''_p$. With a slight abuse of notation, under $\mathbb P''_p$ the event $\{ o \leftrightsquigarrow \partial B_n \}$ denotes the event that there is a vertex in $\partial B_n$ that can be reached via a path from $o$ consisting of open sites.
For $p \in (0,1)$ and $n \geq 0$, we have 
\[ \P_p^{\mathrm{aon}}[o \rightsquigarrow \partial B_{n+1}]= \mathbb P''_p[o \leftrightsquigarrow \partial B_n]. \]
Note that, together with Lemma~\ref{lem-dirundir} and the first inequality in~\eqref{eq:comparison measures}, we recover the well-known inequality
\begin{equation*}
   \mathbb P''_p[o \leftrightsquigarrow \infty ] = \P_p^{\mathrm{aon}}[o \rightsquigarrow \infty] \leq \P_p[o \rightsquigarrow \infty] = \mathbb P'_p[o \leftrightsquigarrow \infty] .
\end{equation*}
\end{remark}

Next, let $p_c(d)$ denote the percolation threshold for independent Bernoulli (undirected) bond percolation on $\Z^d$ and let $p_c^{\rm D}(d)$ be the critical value of $p$ for percolation in the $2dp$-DnG in $d$ dimensions. Corollary~\ref{prop:comparison-k-dng-bernoulli} and~\ref{lem-dirundir} together with Theorem~\ref{thm:local-to-global} and Kesten's result ~\cite{Kesten1988pc} stating that 
\begin{equation}\label{Kesten} p_c(d) \sim 1/(2d) \qquad \text{ as } d\to\infty \end{equation}
implies the following corollary.

\begin{corollary}[Large dimensions]\label{cor-D}
For all $d\ge 1$, $p_c^{\rm D}(d)\le p_c(d)$ and $2d p_c^{\rm D}(d)< 1+\eps$ for all $\eps>0$ and dimensions $d$ sufficiently large.
\end{corollary}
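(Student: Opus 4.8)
The plan is to assemble Corollary~\ref{prop:comparison-k-dng-bernoulli}, Theorem~\ref{thm:local-to-global}, Lemma~\ref{lem-dirundir}, and Kesten's asymptotics~\eqref{Kesten}; no genuinely new analytic input is needed, the work is entirely in threading the earlier results together correctly.

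First I would establish that the $2dp$-DnG percolates whenever $p>p_c(d)$. By Corollary~\ref{prop:comparison-k-dng-bernoulli}, the local comparison~\eqref{local-comparison} holds with $\mathbf P=\mathbf P_p$ and $\mathbf Q=\mathbf Q_p$ (the inequality there is stated for all $A\subset\mathcal N_o$, hence in particular for all $\emptyset\subsetneq A\subsetneq\mathcal N_o$). Theorem~\ref{thm:local-to-global} then yields $\mathbb P_p[o\rightsquigarrow\partial B_n]\le \mathbb Q_p[o\rightsquigarrow\partial B_n]$ for every $n$, and letting $n\to\infty$ (as recorded right after Theorem~\ref{thm:local-to-global}) gives $\mathbb P_p[o\rightsquigarrow\infty]\le \mathbb Q_p[o\rightsquigarrow\infty]$. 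On the other hand, Lemma~\ref{lem-dirundir} together with the same limit gives $\mathbb P_p[o\rightsquigarrow\infty]=\mathbb P'_p[o\leftrightsquigarrow\infty]$, which is strictly positive precisely because $p>p_c(d)$. Hence $\mathbb Q_p[o\rightsquigarrow\infty]>0$, i.e.\ the $2dp$-DnG percolates.

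Since this holds for every $p>p_c(d)$, the definition of $p_c^{\rm D}(d)$ as the threshold for percolation of $\mathbb Q_p$ immediately gives $p_c^{\rm D}(d)\le p$ for every such $p$, and therefore $p_c^{\rm D}(d)\le p_c(d)$. (To be sure $p_c^{\rm D}(d)$ is a genuine threshold rather than merely the infimum of percolating parameters, one notes that $p\mapsto\mathbb Q_p[o\rightsquigarrow\infty]$ is nondecreasing: for $p<p'$ the map $n\mapsto f(n,\ell)=1-\binom{2d-n}{\ell}/\binom{2d}{\ell}$ from the proof of Proposition~\ref{prop-aonk} is nondecreasing, hence so is its piecewise-linear interpolant, which is exactly $\mathbf Q_p[N(o)\cap A\neq\emptyset]\le\mathbf Q_{p'}[N(o)\cap A\neq\emptyset]$ for all $A$, and Theorem~\ref{thm:local-to-global} applied with $\mathbf P=\mathbf Q_p$, $\mathbf Q=\mathbf Q_{p'}$ upgrades this to the one-arm event.) For the asymptotic bound, fix $\eps>0$. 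By~\eqref{Kesten} there is $d_0$ with $2d\,p_c(d)<1+\eps$, equivalently $p_c(d)<(1+\eps)/(2d)$, for all $d\ge d_0$; for such $d$ choose any $p$ with $p_c(d)<p<(1+\eps)/(2d)$. The previous step shows the $2dp$-DnG percolates, so $p_c^{\rm D}(d)\le p<(1+\eps)/(2d)$, that is $2d\,p_c^{\rm D}(d)<1+\eps$.

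I do not anticipate a real obstacle: the argument is a bookkeeping exercise connecting the directed model $\mathbf P_p$ — to which Corollary~\ref{prop:comparison-k-dng-bernoulli} and Theorem~\ref{thm:local-to-global} directly apply — with undirected Bernoulli percolation, to which Kesten's theorem applies, the bridge being Lemma~\ref{lem-dirundir}. The only points requiring a line of care are the (already noted) passage $n\to\infty$ in the one-arm probabilities and, if one insists on it, the monotonicity remark above; both are immediate from the tools already in hand.
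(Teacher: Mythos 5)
Your proof is correct and follows exactly the route the paper outlines: combine Corollary~\ref{prop:comparison-k-dng-bernoulli} (the local comparison) with Theorem~\ref{thm:local-to-global}, bridge to undirected Bernoulli percolation via Lemma~\ref{lem-dirundir}, and invoke Kesten's asymptotics~\eqref{Kesten}. The monotonicity aside about $p\mapsto\mathbb Q_p$ is a harmless extra that the paper leaves implicit; otherwise you are simply spelling out the bookkeeping the paper compresses into one sentence before the corollary.
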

In words, in the DnG, a single outgoing edge plus a second outgoing edge that is present with a small positive probability is enough to guarantee percolation in all sufficiently large dimensions. 

\medskip
Before this paper, the question of percolation in the $2$-DnG, i.e., with precisely two outgoing edges, has been open for $d \geq 3$. In two dimensions, percolation in the $2$-DnG, and even $p_c^{\rm D}(2)<1/2$, was proven in~\cite{CHJK24}. Since $p_c(2) = 1/2$ in dimension two (see~\cite{Kesten1980b} and the references therein), we note that Corollary~\ref{cor-D} provides an easy proof for $p_c^{\rm D}(2)\le 1/2$, but not for percolation of the $2$-DnG (and for $d=1$, it is trivial that the $2$-DnG percolates). On the other hand, the absence of percolation for $2dp \leq 1$ follows from an easy argument (see~\cite[Proposition 2.1]{JKLT}) for any dimension. This way, the assertion of Corollary~\ref{cor-D} is optimal up to asymptotic equivalence in the limit $d \to \infty$. 
However, even with~\eqref{Kesten} at hand, to the best of our knowledge, $p_c(d) <1/d$ has not been proven for any specific dimension. Thus, while Corollary~\ref{cor-D} implies that the $2$-DnG, i.e., the graph with exactly two outgoing edges, percolates for all sufficiently large $d$, we are currently unable to establish so for any particular value of $d\ge 3$. 

In contrast, it follows from~\cite{YWpercolation} that $p_c(3) \leq 0.347297$. Thus, by Corollary~\ref{cor-D}, the $2dp$-DnG percolates for $p > 0.347297$, i.e., for $2dp>2.083782$. This strengthens the corresponding result in~\cite{CHJK24}, but the integer case $k=2dp=2$ remains unresolved. Moreover, it has been proven in~\cite[Page 14]{gomes2021upper} that $p_c(4) \leq 0.2788$. Again by Corollary~\ref{cor-D}, the $2dp$-DnG percolates for $p > 0.2788$ in four dimensions, i.e., for $2dp> 2.2305 = 8 \cdot 0.2788$. 
Notably, our argument implies percolation of the $2dp$-DnG for $k=2dp=3$ and $d=4$, resolving the last case where the question of percolation in the $k$-DnG has been open for an integer $k \geq 3$. We summarize the state of the art for the integer case, i.e., when $\eps=0$, in Table~\ref{table-BDU} below. Note that the upper bounds of~\cite{gomes2021upper} on $p_c(d)$ in low dimensions imply that $2dp_c^{\rm D}(d) < 3$ also holds for all $5 \leq d \leq 9$ (while in~\cite{JKLT} only percolation for $k=3$ was shown), but percolation for $k=2$ cannot be derived from them in any dimension.

Another $2dp$-neighbor model is the \emph{undirected $2dp$-neighbor graph} $2dp$-UnG, which is the undirected graph where an undirected nearest-neighbor edge $\{ u,v \}$ of $\Z^d$ is open if and only if $u \in N(v)$ or $v \in N(u)$. Since percolation in the $2dp$-DnG implies percolation in the $2dp$-UnG, our results also affirmatively resolve previously open cases for the undirected model---specifically, percolation for $k=3$ in dimension $d=4$ as well as $k=2$ in sufficiently high dimensions. Again, Table~\ref{table-BDU} summarizes the current state of knowledge of percolation in the $2dp$-UnG (and in a third $2dp$-nearest-neighbor model called the $2dp$-BnG, which will be introduced in Section~\ref{sec-B} below) in low dimensions.

\begin{table}[h!]
\centering
\scalebox{0.78}{
\begin{tabular}{|l|c|c|c|c|c|c|c|c|} 
 \hline
\diagbox{$d$}{$k$} & 1 & 2 & 3 & 4 & 5 & 6 & $\geq 7$ \\ \hline 
1 & no/no/no & yes/yes/yes & - & - & - & - & - \\ \hline
2 & no/no/no & no/\color{blue}yes\color{black}/yes & open/yes/yes & yes/yes/yes & - & - & - \\ \hline
3 & no/no/no & no/open/open & no/\color{blue}yes\color{black}/yes & open/yes/yes & yes/yes/yes & yes/yes/yes & - \\ \hline
4 & no/no/no & no/open/open & no/\color{red}yes/yes\color{black} & open/yes/yes & open/yes/yes & \color{red}yes\color{black}/yes/yes & yes/yes/yes \\ \hline
5 & no/no/no & no/open/open & no/yes/yes & open/yes/yes & open/yes/yes & \color{red}yes\color{black}/yes/yes & \color{red}yes\color{black}/yes/yes \\  \hline
$\geq 6$ & no/no/no & no/open/open & no/yes/yes & open/yes/yes & open/yes/yes & open/yes/yes & open/yes/yes \\  \hline
\text{large} & no/no/no & no/\color{red}yes/yes\color{black} & no/yes/yes & no/yes/yes & no/yes/yes & no/yes/yes & open/yes/yes \\ \hline
\end{tabular}
}
\caption{
Percolation results and open cases for the BnG/DnG/UnG in small dimensions and integers $k$. For given $k,d$, ``yes'' means that the given graph percolates, ``no'' means that it does not, while ``open'' means that the given case is (at least partially) open. Black: proven in~\cite{JKLT}, blue: in~\cite{CHJK24}, red: in our paper. The row titled ``$\geq 6$'' describes the case $d \geq 6$ in general, which still includes open cases. The row titled ``large'' collects results that are true for all sufficiently large $d$. Here, the case of the BnG for $k \geq 7$ is still summarized as ``open'' for short, but we know from Proposition~\ref{prop-B} that the critical value of $k$ for percolation is roughly $\sqrt{2d}$ for large $d$.
}\label{table-BDU}
\end{table}

\subsection{Bidirectional nearest-neighbor models}\label{sec-B}
Let $p \in [0,1]$ and recall the local law $\mathbf Q_p$ of the $2dp$-DnG based on the parameter pair $(k,\eps)$ defined in~\eqref{kepsdef}. $\mathbf Q_p$ can be used to define another law $\mathbb Q^{\rm B}_{p}$, on the {\em undirected} nearest-neighbor edges in $\Z^d$, by declaring any such edge $\{ u, v \}\subset \Z^d$ as open if and only if $u \in N(v)$ and $v \in N(u)$. We call this the \emph{bidirectional $2dp$-neighbor graph} $2dp$-BnG. For $p_c^{\rm B}(d)$, the critical value of $p$ for percolation in the $2dp$-BnG in $d$ dimensions, we have the following result.
\begin{proposition}(Large dimensions for bidirectional models)\label{prop-B}
We have that \[ \lim_{d\to\infty} p_c^{\rm B}(d)\sqrt{2d} = 1. \numberthis\label{B-asymp} \]
\end{proposition}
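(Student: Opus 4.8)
The plan is to establish the two matching one-sided bounds $\liminf_{d\to\infty}p_c^{\rm B}(d)\sqrt{2d}\ge 1$ and $\limsup_{d\to\infty}p_c^{\rm B}(d)\sqrt{2d}\le 1$, which together force the limit to exist and equal $1$. Recall from \eqref{kepsdef} that the $2dp$-BnG is a function of the i.i.d.\ field $(N(u))_{u\in\mathbb Z^d}$ with $N(u)\sim\mathbf Q_p$, that $\mathbf Q_p[v\in N(o)]=p$ for every fixed neighbor $v$ (since $\mathbf Q_p$ is exchangeable with average degree $2dp$), and that $|N(o)|$ takes only the two consecutive values $k=\lfloor 2dp\rfloor$ and $k+1$. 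In particular, a fixed undirected edge is bidirectionally open with probability exactly $p^2$ and the expected BnG-degree of a vertex is $2dp^2$.

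\emph{Lower bound: $2dp^2<1$ prevents percolation.} Fix $\eta\in(0,1)$ and put $p=(1-\eta)/\sqrt{2d}$, so $2dp^2=(1-\eta)^2<1$. If $o\leftrightsquigarrow\partial B_n$ in the $2dp$-BnG, there is a bidirectionally open, self-avoiding nearest-neighbor path $\gamma=(o=x_0,x_1,\dots,x_n)$ of length $n$ from $o$. Because the $x_i$ are distinct and the fields $(N(u))_u$ are independent, $\{\gamma\text{ open}\}$ factorizes over the vertices of $\gamma$: each endpoint contributes $\mathbf Q_p[x_1\in N(o)]=p$, while each interior vertex $x_i$ ($1\le i\le n-1$) contributes $\mathbf Q_p[\{x_{i-1},x_{i+1}\}\subseteq N(x_i)]$, which equals the $\mathbf Q_p$-expectation of $|N(o)|(|N(o)|-1)$ divided by $2d(2d-1)$ and is at most $p^2\tfrac{2d}{2d-1}$ (since $|N(o)|$ is concentrated on the two consecutive values $k,k+1$, its second factorial moment is at most $(2dp)^2$). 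Bounding the number of self-avoiding paths of length $n$ from $o$ by $(2d)^n$ then yields
\[
\mathbb Q^{\rm B}_{p}[o\leftrightsquigarrow\partial B_n]\ \le\ (2d)^n\,p^2\Big(p^2\tfrac{2d}{2d-1}\Big)^{n-1}\ \le\ \Big((1-\eta)^2\tfrac{2d}{2d-1}\Big)^{n},
\]
which decays exponentially in $n$ as soon as $d$ is large enough that $(1-\eta)^2\tfrac{2d}{2d-1}<1$. Hence the $2dp$-BnG does not percolate for such $d$, so $p_c^{\rm B}(d)\ge(1-\eta)/\sqrt{2d}$, and letting $\eta\downarrow0$ gives $\liminf_{d\to\infty}p_c^{\rm B}(d)\sqrt{2d}\ge1$.

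\emph{Upper bound: $2dp^2>1$ forces percolation.} Fix $\eta>0$ and put $p=(1+\eta)/\sqrt{2d}$, so $k=\lfloor 2dp\rfloor=(1+\eta)\sqrt{2d}-O(1)$ and $2dp^2=(1+\eta)^2>1$. I would explore the BnG-cluster of the origin breadth-first: when a vertex $u$ is first reached via an already-examined neighbor $w$, reveal the remaining randomness of $N(u)$ and declare the \emph{children} of $u$ to be those $v\in N(u)$ outside the explored set with $u\in N(v)$. The decisive high-dimensional feature is that the explored set stays \emph{locally sparse}: with high probability every explored vertex $u$ has only $O(1)$ of its $2d$ lattice-neighbors already explored. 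Granting this, when $u$ is processed it still has at least $k-O(1)$ fresh candidate neighbors, and for each such candidate $v$ the event $\{u\in N(v)\}$ depends only on the not-yet-revealed field $N(v)$ and is thus an independent $\mathrm{Bernoulli}(p)$; consequently the number of children of $u$ stochastically dominates a $\mathrm{Bin}(k-O(1),p)$ variable, whose mean equals $(1+\eta)^2-O(1/\sqrt{2d})>1$ for $d$ large. The cluster exploration therefore dominates a supercritical Galton--Watson process, which survives with probability $\rho(\eta)>0$; on survival the exploration never terminates, so the origin lies in an infinite BnG-cluster, and by ergodicity the $2dp$-BnG percolates. This gives $p_c^{\rm B}(d)\le(1+\eta)/\sqrt{2d}$ and, as $\eta\downarrow0$, $\limsup_{d\to\infty}p_c^{\rm B}(d)\sqrt{2d}\le1$.

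\emph{Main obstacle.} The load-bearing and only genuinely delicate step is the local-sparsity claim, and in particular maintaining it for \emph{all} generations of the exploration rather than just the first $\Theta(\log d)$ of them, after which the explored set is no longer of size $o(d)$ and the naive collision bound degrades. I would control this by branching only from ``good'' vertices---those with at most a fixed constant number of already-explored lattice-neighbors, which form an overwhelming majority in high dimensions---and by bounding the appearance of locally dense regions through a union bound, using that each newly revealed edge closes a short cycle with probability $O(1/d)$. To keep the accumulated error probabilities summable, I would grow the cluster as a succession of sub-explorations launched from the far frontier into essentially untouched parts of $\mathbb Z^d$---legitimate because in high dimensions a frontier vertex sees only $O(1)$ of the explored set in its unit ball---tracking the near-independence of these sub-explorations carefully; a block/renormalization argument upgrading ``the cluster is large with probability bounded below'' to percolation is an alternative route. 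Once such a scheme is in place, the numerical inputs ($k=(1+\eta)\sqrt{2d}-O(1)$, $p=(1+\eta)/\sqrt{2d}$, offspring mean $(1+\eta)^2-o(1)>1$) are routine.
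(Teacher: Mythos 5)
Your lower bound is correct and essentially self-contained. The factorization over vertices of a self-avoiding path is exact (the endpoints each contribute $p$, each interior vertex contributes $\mathbf E_{\mathbf Q_p}[|N(o)|(|N(o)|-1)]/(2d(2d-1))$, and since $|N(o)|\in\{k,k+1\}$ one checks $\mathbf E[|N(o)|(|N(o)|-1)]=k(k-1)+2\varepsilon k\le (k+\varepsilon)^2=(2dp)^2$, so the per-interior-vertex factor is at most $p^2\cdot\tfrac{2d}{2d-1}$); the path count $\le (2d)^n$ is crude but, together with $\tfrac{2d}{2d-1}\to 1$, yields exponential decay whenever $2dp^2=(1-\eta)^2<1$ and $d$ is large. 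This is essentially the same first-moment/path-counting idea behind the [JKLT] result the paper invokes (the paper uses the sharper connective-constant bound $c(d)\le 2d-1$, but this makes no difference in the limit), so I would not call it a genuinely different route for the $\liminf$.

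The upper bound, however, has a genuine gap, and you have in fact identified it yourself: the ``local-sparsity'' claim needed to dominate the exploration by a supercritical Galton--Watson process. Controlling collisions of the exploration with itself uniformly over all generations---not just the first $\Theta(\log d)$---is exactly the hard content of high-dimensional percolation asymptotics (this is what makes Kesten's proof of $p_c(d)\sim 1/(2d)$ nontrivial), and it is compounded here by the dependence inside each $N(u)$ (a uniform $k$- or $(k+1)$-subset is not a product measure, so ``revealing the remaining randomness of $N(u)$'' after conditioning on which explored neighbors are or are not in it requires care). Your sketches---restricting to ``good'' vertices, relaunching sub-explorations from the frontier, a block/renormalization upgrade---are the right heuristics, but none is carried out, and completing any of them is a substantial argument rather than a routine verification. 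The paper sidesteps this entirely: Lemma~\ref{lemma-bidir} and Theorem~\ref{theorem-oAB} reduce the $2dp$-BnG one-arm probability to that of $\mathbb P_p^{\rm B}$, which is nothing but i.i.d.\ Bernoulli bond percolation at parameter $p^2$, and then Kesten's asymptotics $p_c(d)\sim 1/(2d)$ (cited, not reproved) give $\limsup_d p_c^{\rm B}(d)\sqrt{2d}\le 1$ via Corollary~\ref{cor-B}. If you want to avoid the comparison machinery you would still, in effect, need Kesten's theorem or a full high-dimensional exploration argument; as written, the upper half of your proof is incomplete.
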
 
The proof, which we present further below, is based on the following generalized version of the local-to-global comparison, where we write $\P^{\rm B}$ for the global law associated to the bidirectional graph based on the local law $\mathbf P$. 
\begin{theorem}(Local-to-global for bidirectional models)\label{theorem-oAB}
    If $\mathbf{P}$, $\mathbf{Q}$ are such that for any two disjoint nonempty sets $A,B \subset \mathcal{N}_o$
    \begin{equation}\label{local-comparison-2}
        \mathbf{P}[N(o) \cap A\neq\emptyset \text{ and } N(o) \cap B\neq\emptyset] \leq \mathbf{Q}[N(o) \cap A\neq\emptyset \text{ and } N(o) \cap B\neq\emptyset] ,
    \end{equation}
    then 
    \begin{equation}
        \mathbb{P}^{\rm B}[o \leftrightsquigarrow \partial B_n] \leq \mathbb{Q}^{\rm B}[o \leftrightsquigarrow \partial B_n]. 
    \end{equation}
\end{theorem}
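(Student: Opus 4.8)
The plan is to mimic the interpolation argument that proved Theorem~\ref{thm:local-to-global}, but with a refined notion of pivotality appropriate to the bidirectional model. As before, I would restrict attention to a finite ball $B_n$, introduce the interpolating family of measures $\mathbb{Q}^{\rm B}_U$ (sampling $N(u)\sim\mathbf{Q}$ for $u\in U$ and $N(v)\sim\mathbf{P}$ otherwise), and prove the single-site step: for $U\subseteq B_n$ and $a\in B_n$,
\begin{equation*}
\mathbb{Q}^{\rm B}_U[o \leftrightsquigarrow \partial B_n] \leq \mathbb{Q}^{\rm B}_{U\cup\{a\}}[o \leftrightsquigarrow \partial B_n].
\end{equation*}
Iterating over all sites of $B_n$ then yields the theorem, exactly as in the proof of Theorem~\ref{thm:local-to-global} from Proposition~\ref{prop:discrete-interpolation}. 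So the whole content is in the single-site replacement step.

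For that step I would condition on $\mathcal{F}_{a^{\rm c}}=\sigma(N(u)\colon u\neq a)$ and analyze how the event $\{o\leftrightsquigarrow\partial B_n\}$ depends on $N(a)$. The key point is that in the bidirectional graph an edge $\{a,v\}$ is open iff $a\in N(v)$ \emph{and} $v\in N(a)$; conditionally on $\mathcal{F}_{a^{\rm c}}$, the set $S:=\{v\in\mathcal{N}_a\colon a\in N(v)\}$ of neighbors that ``point back'' at $a$ is determined, and the edges incident to $a$ that can possibly be open are exactly those toward $S$, with edge $\{a,v\}$ for $v\in S$ being open iff $v\in N(a)$. Removing $a$ from the graph, the components of the graph on $B_n\setminus\{a\}$ partition $S$; let $S=\bigsqcup_j S_j$ according to which component each vertex of $S$ lies in, and let $\mathcal{I}$ be the set of indices $j$ such that $S_j$'s component already reaches $\partial B_n$ (or is $\{o\}$'s component), and $\mathcal{J}$ the set of indices whose component contains $o$. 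I distinguish: (i) if $o$ is already connected to $\partial B_n$ without $a$, or if even $N(a)=\mathcal{N}_a$ cannot help, the event is $\mathcal{F}_{a^{\rm c}}$-measurable and there is nothing to do; (ii) otherwise $a$ is ``pivotal'', and on this event $\{o\leftrightsquigarrow\partial B_n\}$ holds iff $N(a)$ meets two suitable disjoint nonempty subsets $A,B\subseteq S\subseteq\mathcal{N}_a$ — one, $A$, being $\bigcup_{j\in\mathcal{J}}S_j$ (needed to connect $o$ to $a$), and the other, $B$, being $\bigcup_{j\in\mathcal{I}}S_j$ (needed to connect $a$ onward to $\partial B_n$), with $A,B$ disjoint since they lie in distinct components of $B_n\setminus\{a\}$. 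Then \eqref{local-comparison-2} gives
\begin{equation*}
\mathbb{Q}^{\rm B}_U[o\leftrightsquigarrow\partial B_n\,|\,\mathcal{F}_{a^{\rm c}}] = \mathbf{P}[N(a)\cap A\neq\emptyset,\ N(a)\cap B\neq\emptyset] \leq \mathbf{Q}[N(a)\cap A\neq\emptyset,\ N(a)\cap B\neq\emptyset] = \mathbb{Q}^{\rm B}_{U\cup\{a\}}[o\leftrightsquigarrow\partial B_n\,|\,\mathcal{F}_{a^{\rm c}}],
\end{equation*}
and taking expectations over $\mathcal{F}_{a^{\rm c}}$ finishes the single-site step.

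The main obstacle — and the place where this differs genuinely from the directed case — is being careful about what ``pivotal'' means and verifying that the relevant event really is the conjunction $\{N(a)\cap A\neq\emptyset\}\cap\{N(a)\cap B\neq\emptyset\}$ with $A,B$ \emph{disjoint and nonempty}. Several subtleties need checking: that $A$ and $B$ can indeed be taken disjoint (this uses that a vertex of $S$ can help connect to $o$ or to $\partial B_n$ but, in the pivotal case, not the same vertex to both — otherwise $o$ would already reach $\partial B_n$ without $a$); that both are nonempty precisely on the pivotal event; and that $A\cup B$ need not be a proper subset of $\mathcal{N}_a$, which is why \eqref{local-comparison-2} is stated for disjoint nonempty $A,B$ with no properness hypothesis, unlike \eqref{local-comparison}. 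One should also note the degenerate subcase where a single component of $B_n\setminus\{a\}$ contains both $o$ and a point of $\partial B_n$-reaching type — but then $a$ is not pivotal and we are in case (i). Once the bookkeeping of components and the sets $A,B$ is set up cleanly (a figure analogous to Figure~\ref{fig:local-comparison} helps), the argument is otherwise a direct transcription of the proof of Proposition~\ref{prop:discrete-interpolation}.
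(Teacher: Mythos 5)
Your proposal is correct and takes essentially the same route as the paper: the same discrete interpolation over $U\mapsto U\cup\{a\}$, conditioning on $\mathcal F_{a^{\rm c}}$, and on the pivotal event the identification of $\{o\leftrightsquigarrow\partial B_n\}$ with $\{N(a)\cap A\neq\emptyset\}\cap\{N(a)\cap B\neq\emptyset\}$ for disjoint nonempty $A,B$ inside $S=\{v\in\mathcal N_a\colon a\in N(v)\}$, followed by an application of \eqref{local-comparison-2}. One small slip to fix: the parenthetical ``(or is $\{o\}$'s component)'' in your definition of $\mathcal I$ would force $\mathcal J\subseteq\mathcal I$ and hence $A\subseteq B$, contradicting disjointness; drop it so that $\mathcal I$ collects only the boundary-reaching components, which on the pivotal event are automatically distinct from $o$'s component, exactly as you note later.
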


\begin{proof}
    The proof follows a similar outline as the proof of Proposition~\ref{prop:discrete-interpolation}. We note the required changes. As in the proof of Theorem~\ref{thm:local-to-global}, we let $\mathcal F_{a^{\rm c}}$ denote the $\sigma$-algebra generated by the set-valued random variables $N(u)$ for $u\neq a$. Conditionally on $\mathcal F_{a^{\rm c}}$, we can again distinguish between three cases, of which case $(3)$ is the only non-trivial one. In this case, we let $A$ (resp.\ $B$) denote the set of neighbors $u$ (resp.\ $v$) of $a$ which are connected to the boundary of $B_n$ (resp.\ the origin) and that satisfy that $a\in N(u)$ (resp.\ $a\in N(v)$). Applying~\eqref{local-comparison-2} in a similar manner as~\eqref{local-comparison} is applied in~\eqref{eq:discrinterp} then yields the desired result.
\end{proof} 
Recall the local i.i.d.\ law $\mathbf P_\cdot$ from the beginning of Section~\ref{sec_ex} and the local $2dp$-DnG law $\mathbf Q_\cdot$ from the start of Section~\ref{sec:k-nn-calculation}. 
\begin{lemma}(Interpolation for bidirectional models)\label{lemma-bidir}
    Fix $p \in [0,1]$, $d\ge 1$, and $k\in\{0,\dots,2d\}$. If $k\ge 1+2dp$, then for any two disjoint nonempty sets $A,B \subset \mathcal{N}_o$,
    \begin{equation}\label{local-comparison-2-bi}
        \mathbf{P}_p[N(o) \cap A\neq\emptyset \text{ and } N(o) \cap B\neq\emptyset] \leq \mathbf{Q}_{k/(2d)}[N(o) \cap A\neq\emptyset \text{ and } N(o) \cap B\neq\emptyset].
    \end{equation}
\end{lemma}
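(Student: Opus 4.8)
The plan is to compute both sides of~\eqref{local-comparison-2-bi} explicitly and reduce it to a one-variable subadditivity statement. Write $m := 2d$, $q := 1-p$, $a := |A|$, $b := |B|$, and for $0 \le j \le m$ set $r_j := \binom{m-j}{k}/\binom{m}{k}$ (so $r_0 = 1$ and $r_j = 0$ for $j > m-k$). For the parameter $k/(2d)$ one has $\eps = 0$ in~\eqref{kepsdef}, so under $\mathbf{Q}_{k/(2d)}$ the set $N(o)$ is a uniformly random $k$-element subset of $\mathcal{N}_o$; hence, by inclusion--exclusion on the complements, $\mathbf{Q}_{k/(2d)}[N(o)\cap A\neq\emptyset,\ N(o)\cap B\neq\emptyset] = 1 - r_a - r_b + r_{a+b}$. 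Since $A$ and $B$ are disjoint, independence under the i.i.d.\ law $\mathbf{P}_p$ gives $\mathbf{P}_p[N(o)\cap A\neq\emptyset,\ N(o)\cap B\neq\emptyset] = (1-q^a)(1-q^b) = 1 - q^a - q^b + q^{a+b}$. Therefore~\eqref{local-comparison-2-bi} is equivalent to $r_a + r_b - r_{a+b} \le q^a + q^b - q^{a+b}$, that is, to the subadditivity estimate $h(a) + h(b) \ge h(a+b)$ for the function $h(j) := q^j - r_j$, $0 \le j \le m$. Note $h(0) = 0$; and $h \ge 0$ because every factor of $r_j = \prod_{i=0}^{j-1}\frac{m-k-i}{m-i}$ is at most $\frac{m-k}{m} < 1-p = q$, using $p \le (k-1)/m$.

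To prove this subadditivity I would first establish two structural properties of $h$. Introduce the auxiliary ratio $g(j) := \frac{r_j}{q^j}\cdot\frac{k}{m-j}$ for $0 \le j \le m-1$ (with $g(j) = 0$ once $r_j = 0$). Using $r_{j+1}/r_j = \frac{m-k-j}{m-j}$ one computes, for $j \le m-k-1$, that $\frac{g(j+1)}{g(j)} = \frac{m-k-j}{q(m-j-1)}$, and since $q \ge \frac{m-k+1}{m}$ this is at most $1$ because $(m-k+1)(m-j-1) - m(m-k-j) = (k-1)(j+1) \ge 0$; thus $g$ is non-increasing on $\{0,\dots,m-1\}$. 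On the other hand, the same identity for $r_{j+1}/r_j$ yields $h(j+1) - h(j) = q^j\big(g(j) - p\big)$. As $g$ is non-increasing and $g(0) - p = \frac{k}{m} - p \ge \frac{1}{m} > 0$, the increment $h(j+1)-h(j)$ is positive for $j$ small and then non-positive: $h$ is unimodal, say non-decreasing on $\{0,\dots,j_0\}$ and non-increasing on $\{j_0,\dots,m\}$. Moreover, wherever two consecutive increments are positive, $\frac{h(j+1)-h(j)}{h(j)-h(j-1)} = q\cdot\frac{g(j)-p}{g(j-1)-p} \le q < 1$, so $h$ is strictly concave on $\{0,\dots,j_0\}$.

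The proof is then completed by the elementary observation that any nonnegative $h$ on $\{0,1,\dots,m\}$ with $h(0) = 0$ that is unimodal and concave on its non-decreasing part $\{0,\dots,j_0\}$ is subadditive. Indeed, take $1 \le a \le b$ with $a + b \le m$. If $b > j_0$, then also $a+b > j_0$, so $h(a+b) \le h(b) \le h(a) + h(b)$. If $a + b \le j_0$, then concavity together with $h(0) = 0$ gives $h(a+b) \le h(a) + h(b)$. In the remaining case $a \le b \le j_0 < a+b$ one uses, in turn, monotonicity on $\{j_0,\dots,m\}$, concavity with $h(0) = 0$ applied to $j_0 = a + (j_0-a)$, and monotonicity on $\{0,\dots,j_0\}$ together with $j_0 - a < b \le j_0$, to obtain $h(a+b) \le h(j_0) \le h(a) + h(j_0-a) \le h(a) + h(b)$. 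Applying this to the function $h$ above finishes the proof; the degenerate parameter choices (such as $k \in \{0,1\}$, where the hypothesis $k \ge 1 + 2dp$ is vacuous or forces $p = 0$, and the cases where $r_j = 0$) are trivial.

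The step I expect to be the real obstacle is not the reduction to subadditivity of $h$, which is mechanical, but the verification that $h$ is unimodal and concave precisely on the range where it increases; a priori $h = q^j - r_j$ is a difference of two convex sequences and need not be concave anywhere. Both properties are forced by the monotonicity of the somewhat unmotivated quantity $g(j) = \frac{r_j}{q^j}\cdot\frac{k}{m-j}$, and identifying $g$ — equivalently, noticing that $h(j+1)-h(j) = q^j(g(j)-p)$ with $g$ non-increasing — is the crux. The final combinatorial lemma (unimodal plus concave on the increasing part implies subadditive) is then routine.
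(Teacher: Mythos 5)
Your proof is correct, and it takes a genuinely different route from the paper's. You compute both sides explicitly, reduce the inequality to discrete subadditivity of $h(j) := q^j - \binom{m-j}{k}/\binom{m}{k}$ (with $m=2d$, $q=1-p$), and establish that via a unimodality-plus-concavity argument driven by the auxiliary ratio $g(j) = \tfrac{r_j}{q^j}\cdot\tfrac{k}{m-j}$; every step checks out, including the identity $h(j+1)-h(j)=q^j(g(j)-p)$, the monotonicity of $g$ (which uses exactly $q\ge\tfrac{m-k+1}{m}$ and the algebraic fact $(m-k+1)(m-j-1)-m(m-k-j)=(k-1)(j+1)\ge 0$), and the three-case subadditivity lemma. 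The paper instead decomposes the event $\{N(o)\cap A\neq\emptyset,\ N(o)\cap B\neq\emptyset\}$ according to the first element of $A$ found in $N(o)$, observes that conditioning $\mathbf{Q}_{k/(2d)}$ on $\{v_i\in N(o),\ v_1,\dots,v_{i-1}\notin N(o)\}$ reduces it (restricted to the remaining coordinates) to $\mathbf{Q}_{(k-1)/(2d)}$, and then invokes the one-set comparison of Corollary~\ref{prop:comparison-k-dng-bernoulli} twice, together with $\mathbf P_p[\,\cdot\,\cap A,\,\cdot\,\cap B] = \mathbf P_p[\,\cdot\,\cap A]\,\mathbf P_p[\,\cdot\,\cap B]$. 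The paper's argument is shorter and reuses the already-established one-set lemma, making the slack of one unit in $k\ge 1+2dp$ conceptually transparent (it is exactly the degree ``spent'' on conditioning one vertex in); your argument is self-contained and more explicit, with the price that the relevance of the somewhat opaque quantity $g$ has to be discovered, and the final combinatorial lemma, while elementary, has to be stated and proved separately. Both are valid; yours would stand on its own even without Corollary~\ref{prop:comparison-k-dng-bernoulli}.
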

\begin{proof}
    Suppose that $\cN_o=\{v_1,\ldots,v_{2d}\}$, with $A=\{v_1,\ldots,v_n\}, B= \{v_{n+1},\ldots,v_{n+m}\}$. Define the events $(\mathcal{A}_i)_{1\leq i \leq n}$ by
    \begin{align*}
        & \mathcal A_i := \{v_i\in N(o) \} \cap \bigcap_{\ell=1}^{i-1} \{ v_\ell \notin N(o) \} .
    \end{align*}
    Observe that the events $\left(\mathcal{A}_i\right)_{1 \leq i \leq n}$ are mutually exclusive and that
    \begin{align*}
        \left\{ N(o) \cap A\neq\emptyset \text{ and } N(o) \cap B\neq\emptyset \right\} = \bigcup_{i=1}^{n} ( \mathcal A_i \cap \left\{ N(o) \cap B\neq\emptyset \right\}).
    \end{align*}
    Conditionally on the event $\mathcal A_i$, we have that
    \begin{align*}
        \mathbf Q_{k/(2d)}[ N(o) \cap B\neq\emptyset | \mathcal A_i]& = \mathbf Q_{k/(2d)}[ N(o) \cap B\neq\emptyset | v_{i} \in N(o), v_1, \ldots, v_{i-1} \notin N(o)]
        \\
        &
        =
        \mathbf Q_{(k-1)/(2d)}[ N(o) \cap B\neq\emptyset | v_1, \ldots, v_{i} \notin N(o)]\\
        &
        \geq
        \mathbf Q_{(k-1)/(2d)}[ N(o) \cap B\neq\emptyset ] \\
        &
        \geq \mathbf P_{(k-1)/(2d)}[ N(o) \cap B\neq\emptyset],
    \end{align*}
    where the last inequality follows from Corollary \ref{prop:comparison-k-dng-bernoulli}. 
    By the law of total probability, we have 
    \begin{align*}
         \mathbf Q_{k/(2d)}[ N(o) \cap A\neq\emptyset \text{ and } N(o) \cap B\neq\emptyset ] 
         &= \sum_{i=1}^{n} \mathbf Q_{k/(2d)}[ N(o) \cap B\neq\emptyset | \mathcal A_i ] \mathbf Q_{k/(2d)}[ \mathcal A_i]\\
         &
        \geq
        \sum_{i=1}^{n} \mathbf P_{(k-1)/(2d)}[N(o) \cap B\neq\emptyset]  \mathbf Q_{k/(2d)}[ \mathcal A_i]
        \\
        &
        =
        \mathbf P_{(k-1)/(2d)}[ N(o) \cap B\neq\emptyset]\mathbf Q_{k/(2d)}[ N(o) \cap A\neq\emptyset].
    \end{align*}
    Another application of Corollary \ref{prop:comparison-k-dng-bernoulli} yields that
    \begin{align*}
        \mathbf P_{(k-1)/(2d)} &[ N(o) \cap B\neq\emptyset]\mathbf Q_{k/(2d)}[ N(o) \cap A\neq\emptyset] \\
        & \qquad 
        \geq
        \mathbf P_{(k-1)/(2d)}[ N(o) \cap B\neq\emptyset] \mathbf P_{k/(2d)}[ N(o) \cap A\neq\emptyset]
        \\
        & \qquad 
        \geq
        \mathbf P_{p}[ N(o) \cap B\neq\emptyset]\mathbf P_{p}[ N(o) \cap A\neq\emptyset]\\
        & \qquad 
        =
        \mathbf P_{p}[ N(o) \cap B\neq\emptyset\text{ and } N(o) \cap A\neq\emptyset],
    \end{align*}
    as desired.
\end{proof}

Note that for the measure $\mathbb{P}^{\rm B}_p$, i.e., the bidirectional version of the directed Bernoulli bond percolation based on $\mathbf{P}_p$, one has that for all pairs of nearest neighbors $u,v\in \Z^d$,
\begin{equation*}
    \mathbb{P}^{\rm B}_p\left[ \{u,v\} \text{ open} \right] = \mathbf{P}_p\left[ u \in N(v) \right] \mathbf{P}_p\left[ v \in N(u) \right] = p^2
\end{equation*}
and that the states of different edges are independent. In particular, in dimensions $d \geq 2$, we have that $\inf_{n \in \N}\mathbb{P}^{\rm B}_p\left[ v \leftrightsquigarrow \partial B_n \right] > 0$ for $p > \sqrt{p_c(d)}$. Using Theorem \ref{theorem-oAB} and Lemma \ref{lemma-bidir}, we get that
\begin{equation*}
    \mathbb{Q}^{\rm B}_{k/(2d)} \left[ 0 \leftrightsquigarrow \infty \right] = \inf_{n \in \N} \mathbb{Q}^{\rm B}_{k/(2d)} \left[ 0 \leftrightsquigarrow \partial B_n \right]
    \ge
    \inf_{n \in \N} \mathbb{P}^{\rm B}_p \left[ 0 \leftrightsquigarrow \partial B_n \right]
    =
    \mathbb{P}^{\rm B}_p \left[ 0 \leftrightsquigarrow \infty \right] > 0
\end{equation*}
if $(k-1)/(2d) \geq p > \sqrt{p_c(d)}$. Using the fact that $p_c(d) \sim 1/(2d)$, we obtain the following corollary.

\begin{corollary}\label{cor-B}
For any $d \geq 2$, if $2dp > 1+2d \sqrt{p_c(d)}$ with $2dp \in \N$, then the $2dp$-BnG percolates. In particular, for any $\delta>0$, if $k \geq  (1+\delta)\sqrt{2d}$, then the $2dp$-BnG percolates almost surely for all sufficiently large dimensions $d$.
\end{corollary}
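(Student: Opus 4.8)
The plan is to deduce Corollary~\ref{cor-B} directly from the chain of inequalities established in the paragraph preceding its statement, together with Kesten's asymptotics~\eqref{Kesten}. Recall that that chain gives $\mathbb{Q}^{\rm B}_{k/(2d)}[o \leftrightsquigarrow \infty] > 0$ as soon as one can exhibit a Bernoulli parameter $q$ with $(k-1)/(2d) \geq q > \sqrt{p_c(d)}$: the lower bound $q > \sqrt{p_c(d)}$ makes $\mathbb{P}^{\rm B}_q$ supercritical Bernoulli bond percolation at edge-parameter $q^2 > p_c(d)$ in dimension $d \geq 2$, so that $\inf_n \mathbb{P}^{\rm B}_q[o \leftrightsquigarrow \partial B_n] > 0$, while the upper bound $q \leq (k-1)/(2d)$ is exactly the hypothesis $k \geq 1 + 2dq$ needed to invoke Lemma~\ref{lemma-bidir}, and Theorem~\ref{theorem-oAB} then propagates the comparison from the local laws to the one-arm events. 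So the first task is simply to verify that the hypothesis of the corollary produces such a $q$.

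First I would translate the hypothesis. Writing $k := 2dp \in \N$ (note $k \leq 2d$ automatically since $p \in [0,1]$, so that $\mathbf Q_{k/(2d)}$ is the local law of the $k$-DnG and $\mathbb{Q}^{\rm B}_{k/(2d)}$ is the law of the $2dp$-BnG, here with $\varepsilon = 0$ because $2dp = k$), the assumption $2dp > 1 + 2d\sqrt{p_c(d)}$ rearranges to $(k-1)/(2d) > \sqrt{p_c(d)}$. Hence the interval $(\sqrt{p_c(d)}, (k-1)/(2d)]$ is non-empty; picking any $q$ in it, say $q = (k-1)/(2d)$, and feeding it into the pre-established chain yields $\mathbb{Q}^{\rm B}_{k/(2d)}[o \leftrightsquigarrow \infty] > 0$, i.e., the $2dp$-BnG percolates.

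For the ``in particular'' part I would take $\delta > 0$ and an integer $k \geq (1+\delta)\sqrt{2d}$, set $p := k/(2d)$ (so $2dp = k \in \N$, and $p \leq 1$ for $d$ large), and check that $2dp > 1 + 2d\sqrt{p_c(d)}$ for all sufficiently large $d$. By~\eqref{Kesten}, $p_c(d) = (1+o(1))/(2d)$, so $2d\sqrt{p_c(d)} = (1+o(1))\sqrt{2d}$ and therefore $1 + 2d\sqrt{p_c(d)} = (1+o(1))\sqrt{2d}$; since $(1+\delta)\sqrt{2d}$ eventually exceeds $(1+o(1))\sqrt{2d}$, the hypothesis of the first part holds for all large $d$, which concludes the proof.

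I do not expect a genuine obstacle here: all the analytic content is already carried by Theorem~\ref{theorem-oAB} and Lemma~\ref{lemma-bidir}, and the corollary is a bookkeeping consequence. The only two points deserving a line of care are keeping the DnG/BnG parameter $p = k/(2d)$ notationally distinct from the Bernoulli comparison parameter $q$, and using the \emph{strict} inequality in the hypothesis to guarantee $(k-1)/(2d) > \sqrt{p_c(d)}$, so that a valid $q$ really exists.
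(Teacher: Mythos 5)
Your proof is correct and matches the paper's argument exactly: the paper presents the chain of inequalities (via $\mathbb{P}^{\rm B}_p$ being Bernoulli percolation at parameter $p^2$, then Lemma~\ref{lemma-bidir} and Theorem~\ref{theorem-oAB}) in the paragraph just before the corollary and treats the corollary itself as immediate bookkeeping, which is precisely what you carry out, including the rearrangement $(k-1)/(2d) > \sqrt{p_c(d)}$ and the Kesten asymptotics for the ``in particular'' part.
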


\begin{proof}[Proof of Proposition~\ref{prop-B}]
For the proof, let $c(d)$ denote the \textit{connective constant} of $\Z^d$, see e.g.~\cite{Grimmett1999}, i.e, 
\begin{align*}
    c(d) := \lim_{n \to \infty}c_n(d)^{1/n},
\end{align*}
where $c_n(d)$ is the number of self-avoiding paths of length $n$ from the origin in the $d$-dimensional hypercubic lattice. In \cite{JKLT} it was shown that if $\eps=0$ and $k(k-1) < 2d(2d-1)/c(d)$, then the associated $2dp$-BnG does not percolate. Since $c(d) \leq 2d-1$, we see that if $k^2 \leq 2d$, then $k(k-1) < 2d(2d-1)/c(d)$ holds and thus the associated $2dp$-BnG does not percolate. In particular, this shows that the $2dp$-BnG does not percolate for $2dp \leq \sqrt{2d}$ when $2dp \in \N$, or equivalently, that the $2dp$-BnG does not percolate for $2dp \leq \lfloor\sqrt{2d}\rfloor$. Dividing by $2d$ on both sides of the inequality, we get that $p_c^B(d) \geq \lfloor\sqrt{2d}\rfloor/2d$ and thus
\be\label{eq:Bliminf}
\liminf_{d\to\infty}p_c^B(d)\sqrt{2d}\geq 1.
\ee 
The other direction $\limsup_{d\to\infty}p_c^B(d)\sqrt{2d}\leq 1$ 
is a direct consequence of Corollary~\ref{cor-B}.
\end{proof}

Let us mention some low-dimensional, previously open cases where we now obtain percolation in the $2dp$-BnG. Corollary \ref{cor-B} guarantees that for $2dp > 8\sqrt{p_c(d)}+1$ with $2dp \in \N$ the $2dp$-BnG percolates.
For $d=4$, the upper bound $p_c(4) \leq 0.2788$ (see \cite[Page 14]{gomes2021upper}) thus guarantees that for $2dp \geq 6$ the $2dp$-BnG percolates. In dimension $d=5$ one has $p_c(5) \leq 0.2284$ (see \cite[Page 14]{gomes2021upper}). This shows that the $2dp$-BnG percolates in dimension $d=5$ when $2dp \geq 6$. For integer $k=2dp$, this result is new when $k \in \{6,7\}$.
See Table~\ref{table-BDU} for a summary of results and open cases of percolation in the $2dp$-BnG, $2dp$-DnG and $2dp$-UnG, for $2dp \in  \N$. 

\subsection{Directed $k$-nearest-neighbor models with additional geometry constraints}
Let us briefly also investigate the following generalized setup of directed $2$-nearest-neighbor percolation on $\Z^2$, first discussed in~\cite{CHJK24}. Instead of opening all possible pairs of outgoing edges with the same probability $1/6$, we distinguish between {\em corners}, i.e., combinations north-west, north-east, south-west, south-east, and {\em sticks}, i.e., combinations north-south and east-west. In order to keep isotropy in the distribution we say that corners are opened with probability $\alpha$ and sticks are opened with probability $\beta$, obeying $4\alpha+2\beta=1$. Hence we have a one-parameter family of measures $(\mathbf P_\alpha)_{0\le\alpha\le 1/4}$ giving rise to measures $(\mathbb P_\alpha)_{0\le\alpha\le 1/4}$ on $\Omega$ that govern our percolation system. Note that, for $\alpha\neq 1/6$ the measures are not exchangeable in the sense of Definition~\ref{def_exchange}. Let $\theta(\alpha)=\P_\alpha[o\rightsquigarrow\infty]$ denote the associated percolation function and note that $\theta(0)=1$. It was shown in~\cite{CHJK24} that the pure corner model percolates, i.e., $\theta(1/4) > 0$, and we have additional numerical evidence that $\alpha\mapsto\theta(\alpha)$ is strictly decreasing and even concave. 

Upper bounding with an i.i.d.\ model does not work since the event of having at least one in  three outgoing edges has probability one under our model but not under the i.i.d.\ model. However, lower bounding with critical i.i.d.\ Bernoulli bond percolation gives a non-trivial percolation regime. More precisely, let $\mathcal{A}_{s}$ denote the event of having an open edge towards the south. Similar let $\mathcal{A}_{se}$, $\mathcal{A}_{sn}$, and $\mathcal{A}_{sen}$, denote the events associated to having at least one open edge towards south/east, south/north, and south/east/north, respectively. Then, 
\begin{align*}
\mathbf P_\alpha[\mathcal{A}_s]&=1/2,\ 
\mathbf P_\alpha[\mathcal{A}_{se}]=3\alpha+2\beta=1-\alpha,\
\mathbf P_\alpha[\mathcal{A}_{sn}]=4\alpha+\beta= 1/2+2\alpha, \
\mathbf P_\alpha[\mathcal{A}_{sen}]=1,
\end{align*}
and, in particular, for $1/8\le \alpha\le 1/4$ all the above probabilities are $\ge$ the associated probabilities for critical i.i.d.\ Bernoulli bond percolation. Hence, any additional small probability to see an additional open edge, together with our comparison result, makes the model with $1/8\le \alpha\le 1/4$ supercritical. 

Here are two more related models for which our method fails. Consider the following directed percolation model in $\Z^2$. Pick one outgoing edge uniformly at random, then flip a coin with success probability $\eps\in [0,1]$ and in case of success, open the edge in the opposite direction as the initially chosen edge. The resulting model is isotropic and has expected degree $1+\eps$. Note that for $\eps=1$, we have $\theta(1)=1$ and $\theta(0)=0$. What can we say about the critical $\eps_c$? In the same vein, consider the model where one outgoing edge is picked uniformly at random and, in case of a successful $\eps$-coin flip, exactly one of the two edges that is perpendicular to the already chosen one is opened uniformly at random. This model is a soft version of the corner percolation model, whereas the model first described is a soft version of the stick percolation model. 

\subsection*{Acknowledgments}
BJ and JK received support by the Leibniz Association within the Leibniz Junior Research Group on \textit{Probabilistic Methods for Dynamic Communication Networks} as part of the Leibniz Competition (grant no.\ J105/2020).
BJ received support from Deutsche Forschungsgemeinschaft through DFG Project no.\ P27 within the SPP 2265. BL has received funding from the European Union’s Horizon 2022 research and innovation programme under the Marie Sk\l{}odowska-Curie grant agreement no.\ $101108569$. LR was partially supported by NSF grant DMS-2303316.
Funding acknowledgements by AT: This paper was supported by the János Bolyai Research Scholarship of the Hungarian Academy of Sciences.  Project no.\ STARTING 149835 has been implemented with the support provided by the Ministry of Culture and Innovation of Hungary from the National Research, Development and Innovation Fund, financed under the STARTING\_24 funding scheme.

\bibliographystyle{alpha}
\bibliography{references}

\end{document}